\newcommand{\ndN}{\mathbb{N}}
\newcommand{\ndZ}{\mathbb{Z}}
\newcommand{\ndR}{\mathbb{R}}
\renewcommand{\Pr}[1]{\mathbb{P}(#1)}
\newcommand{\Ex}[1]{\mathbb{E}[#1]}
\newcommand{\cC}{\mathcal{C}}
\newcommand{\cM}{\mathcal{M}}
\newcommand{\cF}{\mathcal{F}}
\newcommand{\cB}{\mathcal{B}}
\newcommand{\cG}{\mathcal{G}}
\newcommand{\cT}{\mathcal{T}}
\newcommand{\cA}{\mathcal{A}}
\newcommand{\cH}{\mathcal{H}}
\newcommand{\mA}{\mathsf{A}}
\newcommand{\mF}{\mathsf{F}}
\newcommand{\mK}{\mathsf{K}}
\newcommand{\mG}{\mathsf{G}}
\newcommand{\mS}{\mathsf{S}}
\newcommand{\mR}{\mathsf{R}}
\newcommand{\cE}{\mathcal{E}}
\newcommand{\eqdist}{\,{\buildrel d \over =}\,}
\newcommand{\Set}{\textsc{SET}}
\newcommand{\cZ}{\mathcal{Z}}
\newtheorem{theorem}{Theorem}[section]
\newtheorem{proposition}[theorem]{Proposition}
\newtheorem{lemma}[theorem]{Lemma}
\newtheorem{definition}[theorem]{Definition}
\numberwithin{equation}{section}
\title{\textbf{ Gibbs partitions: the convergent case}}
\date{}
\author{Benedikt Stufler\thanks{\'Ecole Normale Sup\'erieure de Lyon, E-mail: benedikt.stufler@ens-lyon.fr; The author is supported by the German Research Foundation DFG, STU 679/1-1}}
\begin{document}
	
	\maketitle
	
\let\thefootnote\relax\footnotetext{ 
\\\emph{MSC2010 subject classifications:} Primary  05A18,05C80 ; secondary 05C30. \\
\emph{Keywords and phrases:} Gibbs partitions, random partitions of sets, random graphs, graph classes, graph limits}

\vspace {-0.5cm}

\begin{abstract}
	We study Gibbs partitions that typically form a unique giant component. The remainder is shown to converge in total variation toward a Boltzmann-distributed limit structure.  We demonstrate how this setting encompasses arbitrary weighted assemblies of tree-like combinatorial structures. As an application, we establish smooth growth along lattices for small block-stable classes of graphs. Random graphs with $n$ vertices from such classes are shown to form a giant connected component. The small fragments may converge toward different Poisson Boltzmann limit graphs, depending along which lattice we let $n$ tend to infinity. Since proper addable minor-closed  classes of graphs belong to the more general family of small block-stable classes, this recovers and generalizes results by McDiarmid (2009).
\end{abstract}

\section{Introduction}

The motivation for the present work stems from various areas, starting with enumerative combinatorics. It was conjectured by Bernardi, Noy and Welsh~\cite{2007arXiv0710.2995B} that proper minor-closed addable classes have smooth growth. Such a condition crops up in related contexts, for example, in McDiarmid, Steger and Welsh \cite{MR2117936,MR2249274}. The conjecture was confirmed by McDiarmid \cite{MR2507738}. One of the methods used is an approach used in Bender, Canfield and Richmond \cite{MR2383441}, who proved smoothness for classes of graphs embeddable on any fixed surface. It was established in \cite{MR2507738} furthermore that random graphs from proper minor-closed addable classes typically admit a giant component, and that the remaining fragments converge in total variation toward a limit called the Boltzmann Poisson random graph of the class. Such a behaviour had previously been observed for random planar graphs by McDiarmid \cite{MR2418771}. The enumerative study of minor-closed classes and related classes of graphs has since then received growing attention in the literature, see Noy \cite{noysurvey} for a comprehensive survey.

A  link from this topic to general models of random partitions can be found in the work by Barbour and Granovsky \cite{MR2121024}. The authors use a perturbed Stein recursion approach to study the asymptotic behaviour of random partitions satisfying a conditioning relation. Various regimes with differing behaviour are known for this general model of partitions~\cite{MR2032426}, and the "convergent case" setting of \cite{MR2121024} is characterized by exhibiting a giant component, whose remainder converges toward an almost surely finite limit. It is an interesting observation, that the distribution of the remainder in \cite{MR2121024} belongs to a family encompassing  the components of the Boltzmann Poisson random graph constructed by McDiarmid \cite{MR2507738}, and only natural, to check whether these results on random partitions can be applied to random graphs. Clearly great care was taken in \cite{MR2121024} to use only a minimum set of requirements, but too little is known apart from smoothness about the asymptotic number of graphs in an arbitrary proper addable minor-closed classes. 

As it is not clear whether these results apply, other options have to be considered. Apart from random partitions that are characterized by a conditioning relation, there is another well-known model that encompasses the component distribution of random graphs from proper addable minor-closed classes: Gibbs partitions.  Both families of random structures are quite general and have a non-trivial intersection, but neither contains the other. The term was coined by Pitman \cite{MR2245368} in his comprehensive survey on combinatorial stochastic processes, and since then further important additions to the theory were made \cite{MR2453776}. Gourdon~\cite[Thm. 1]{MR1603725} gave results in a specific setting, where a giant component emerges, and the size of the remainder converges in distribution. He required the exponential generating function of the structures on the components to be amendable to singularity analysis~\cite[Thm. 1]{MR2483235}, such that its coefficients are asymptotically close to $c (\log n)^\beta n^{-\alpha}$ for some constants $c$, $\beta$ and  $1 < \alpha <2$. Whenever methods from analytic combinatorics apply, they yield results of great precision, which is impressively demonstrated in the tail-bounds \cite[Thm. 2]{MR2483235} for the size of the remainder. However, these requirements are much more specific than in the mentioned work \cite{MR2121024} for partitions with a conditioning relation, and there are known examples of minor-closed addable classes such as random planar graphs \cite{MR2476775}, for which $\alpha=7/2$ lies outside of the considered interval.

For these reasons, it is desirable to establish a "convergent case" regime for Gibbs partitions, that is as general as possible, and in which a similar behaviour as in Barbour and Granovsky's  setting~\cite{MR2121024} may be observed.  In the present work, we consider Gibbs partitions with a subcritical composition scheme, such that the generating series of the structures on the components belongs to the family of subexponential sequences studied in  \cite{MR0348393,MR714482,MR772907}. The elements of this family  correspond up to tilting and normalizing to subexponential densities of lattice distributed random variables, and hence may be put in the general context of heavy-tailed and subexponential distributions \cite{MR3097424}.  Our first main result establishes that Gibbs partitions in this setting exhibit a giant component, and the small rest converges in total variation toward a limit structure following a weighted Boltzmann distribution. In order to demonstrate its broad scope and relevance for combinatorial questions, we use the strong ratio property and a number of results related to simply generated trees \cite{MR2245498}, to show how analytic assemblies of arbitrarily tree-like combinatorial structures belong to this regime. %Here tree-like is to be understood in a very broad sense, encompassing various classes of graphs and other combinatorial structures.

We apply our results to small block-stable classes of graphs. For any such class $\cA$, we partition the integers into a finite set of shifted lattices of the form $a + d\ndZ$ for $0 \le a < d$, along which the class $\cA$  grows smoothly. This allows us to characterize precisely when $\cA$ belongs to the family of smooth graph classes. The uniform $n$-sized random graph from $\cA$ is shown to form a giant component with a stochastically bounded remainder.  The fragments not contained in the giant component converge to different Boltzmann Poisson random graphs,  depending along which lattice we let $n$ tend to infinity. Any proper addable minor-closed class of graphs is small and block-stable, but the converse does not hold. Hence this recovers and generalizes  corresponding results by McDiarmid \cite[Theorems 1.2, 1.7]{MR2507738}, who established smooth growth and convergence of the small fragments for proper addable minor-closed classes. Our approach also works in various other settings, for which we provide some examples, including random graphs drawn with probability proportional to weights assigned to their blocks. % for proper addable minor-closed classes of graphs.

\subsection*{Plan of the paper}
In Section~\ref{sec:prel} we fix notations and recall necessary background related to Gibbs partitions, graph classes and subexponential sequences. Section~\ref{sec:conv} presents our results on Gibbs partitions in the convergent case, and Section~\ref{sec:graphs} discusses their applications to small block-stable graph classes. Section~\ref{sec:ext} discusses extensions to similar settings. In Section~\ref{sec:proofs} we collect all proofs.

\section{Preliminaries}
\label{sec:prel}

\subsection{Notation}

Throughout, we set
\[
\ndN=\{1,2,\ldots\}, \qquad \ndN_0 = \{0\} \cup \ndN, \qquad [n]=\{1,2,\ldots, n\}, \qquad n \in \ndN_0.
\]
We usually assume that all considered random variables are defined on a common probability space $(\Omega, \mathscr{F}, \mathbb{P})$.  All unspecified limits are taken as $n$ becomes large, possibly along an infinite subset of $\ndN$.  The {\em total variation distance} between two random variables $X$ and $Y$ with values in a countable state space $S$ is defined by
\[
d_{\textsc{TV}}(X,Y) = \sup_{\cE \subset S} |\Pr{X \in \cE} - \Pr{Y \in \cE}|.
\]
A sequence of $\ndR$-valued random variables $(X_n)_{n \ge 1}$ is {\em stochastically bounded}, if for each $\epsilon > 0$ there is a constant $M>0$ with
\[
\limsup_{n \to \infty} \Pr{ |X_n| \ge M} \le \epsilon.
\]
We let $\ndR_{>0}$ and $\ndR_{\ge 0}$ denote the sets of positive and non-negative real numbers, respectively. A function $h: \ndR_{>0} \to \ndR_{>0}$ is called {\em slowly varying}, if
\[
\lim_{x \to \infty}\frac{h(tx)}{h(x)} = 1
\]
for all fixed $t>0$. For any power series $f(z)$, we let $[z^n]f(z)$ denote the coefficient of $z^n$.

\subsection{Weighted combinatorial species and generating functions}
Let $\cF^\omega$ denote a {\em species of combinatorial structures}  with non-negative weights in the sense of Joyal~\cite{MR633783}.
That is, for each finite set $U$ we are given a finite set $\cF[U]$ of {\em $\cF$-structures} and a map
\[
\omega_U: \cF[U] \to \ndR_{\ge 0}.
\]
Moreover, for each bijection $\sigma: U \to V$ the species $\cF$ produces  a  corresponding bijection 
\[
\cF[\sigma]: \cF[U] \to \cF[V]
\]
that preserves the $\omega$-weights. This may be expressed by requiring that the diagram
\[
\xymatrix{ \cF[U]  \ar[r]^{\cF[\sigma]} \ar[dr]^{\omega_U} 
	&\cF[V]\ar[d]^{\omega_V}\\
 		    &\ndR_{\ge 0}}
\]
commutes. Species are also subject to the usual functoriality requirements: the identity map $\text{id}_U$ on $U$ gets mapped to the identity map $\cF[\text{id}_U] = \text{id}_{\cF[U]}$ on the set $\cF[U]$. For any bijections $\sigma: U \to V$ and $\tau: V \to W$ the diagram
\[
\xymatrix{ \cF[U]  \ar[r]^{\cF[\sigma]} \ar[dr]^{\cF[\tau \sigma]} 
	&\cF[V]\ar[d]^{\cF[\tau]}\\
	&\cF[W]}
\]
commutes. As a last requirement, we also assume that $\cF[U] \cap \cF[V] = \emptyset$ whenever $U \ne V$. This is not much of a restriction, as we may always replace $\cF[U]$ by $\{U\} \times \cF[U]$ for all sets $U$, to make sure that it is satisfied. 

Two weighted species $\cF^\omega$ and $\cH^\gamma$ are {\em structurally equivalent} or {\em isomorphic}, denoted by $\cF^\omega \simeq \cH^\gamma$, if there is a family of weight-preserving bijections $(\alpha_U: \cF[U] \to \cH[U])_U$ with $U$ ranging over all finite sets, such the following diagram commutes for each  bijection  $\sigma: U \to V$ of finite sets.
\[
\xymatrix{ \cF[U] \ar[d]^{\alpha_U} \ar[r]^{\cF[\sigma]} &\cF[V]\ar[d]^{\alpha_V}\\
	\cH[U] \ar[r]^{\cG[\sigma]} 		    &\cH[V]}
\]

We will often write $\omega(F)$ instead of $\omega_U(F)$ for the weight of a structure $F \in \cF[U]$. For any $\cF$-object $F \in \cF[U]$ we let
\[
|F| := |U| \in \ndN_0
\]
denote its {\em size}. It will be convenient to use the notation
\[
	\mathscr{U}(\cF) = \bigcup_{n \ge 0} \cF[n].
\]
 Here we write $\cF[n] = \cF[ \{1, \ldots, n \}]$ for all non-negative integers $n$. This allows us to define the {\em exponential generating series}
\[
	\cF^\omega(z) = \sum_{F \in \mathscr{U}(\cF)} \omega(F) \frac{z^{|F|}}{|F|!} 
\]
as a formal power series with non-negative coefficients. A simple example is the species $\Set$ where $\Set[U] = \{U\}$ for each finite set $U$ and each object receives weight $1$. Hence $\Set(z) = \exp(z)$.

Two $\cF$-objects $F_1 \in \cF[U]$ and $F_2 \in \cF[V]$ are termed {\em isomorphic}, denoted by $F_1 \simeq F_2$, if there is a bijection $\sigma:U \to V$ such that $\cF[\sigma](F_1) = F_2$. An {\em unlabelled} $\cF$-object is formally defined as a  maximal class of pairwise isomorphic objects. The unlabelled object corresponding to a given $\cF$-object $F$ is also termed its {\em isomorphism type} and denoted by $\tilde{F}$.  

We are going to consider probability measures on the collection of all unlabelled $\cF$-objects. From a formal viewpoint, this may be slightly problematic, because infinite collections of proper classes are not well-defined objects. But this more of a notational issue that could easily be resolved by working with a fixed set of representatives instead.

\subsection{Composite and derived structures}

 Let $\cF^\omega$ and $\cG^\nu$ be combinatorial species with non-negative weights, such that $\cG^\nu[\emptyset] = \emptyset$. The {\em composition} $\cF^\omega \circ \cG^\nu = (\cF \circ \cG)^\mu$ of the two species describes partitions of finite sets where each partition class is endowed with a $\cG$-structure, and the collection of partition classes carries an $\cF$-structure. Formally, it is defined by setting for each finite set $U$
\[
	(\cF \circ \cG)[U] = \bigcup_{\pi}  \cF[\pi] \times \prod_{Q \in \pi} \cG[Q]
\]
with the index $\pi$ ranging over all unordered partitions of $U$ with non-empty partition classes. That is, $\pi$ is a set of non-empty subsets of $U$ such that $U = \bigcup_{Q \in \pi} Q$ and  $Q \cap Q' = \emptyset$ for all $Q, Q' \in \pi$ with $Q \ne Q'$. The weight of a composite structure $(F, (G_Q)_{Q \in \pi})$ is defined by 
\[
	\mu(F, (G_Q)_{Q \in \pi}) = \omega(F) \prod_{Q \in \pi} \nu(G_Q).
\]
For any bijection $\sigma: U \to V$, the corresponding function 
\[
(\cF \circ \cG)[\sigma]: (\cF \circ \cG)[U] \to (\cF \circ \cG)[V]
\]
is defined as follows. For each element $(F, (G_Q)_{Q \in \pi}) \in (\cF \circ \cG)[U]$ we let $\bar{\pi} = \{ \sigma(Q) \mid Q \in \pi\}$ denote a partition of $V$ and set
\[
\bar{\sigma}: \pi \to \bar{\pi}, Q \mapsto \pi(Q).
\]
For each $Q \in \pi$ we let 
\[
\sigma|_Q: Q \to \sigma(Q), x \mapsto \sigma(x) \]
 denote the restriction of $\sigma$ to the class $Q$. We set
\[
(\cF \circ \cG)[\sigma](F, (G_Q)_{Q \in \pi}) = (\cF[\bar{\sigma}](F), ( \cG[ \sigma|_Q](G_{\sigma^{-1}(P)})_{ P \in \bar{\pi}}).
\]
The generating series of the composition satisfies \cite[Prop. 24]{MR633783}
\[
	(\cF^\omega \circ \cG^\nu)(z) = \cF^\omega(\cG^\nu(z)).
\]

A further construction that we are going to use  is the {\em derived} species $(\cF')^\omega$ defined as follows. For each set $U$ we let $*_U$ denote a placeholder object not contained in $U$. For example, we could define $*_U = U$, as no set is allowed to be an element of itself. We set
\[
	\cF'[U] = \cF[U \cup \{ *_U\}].
\]
The weight of an element $F' \in \cF'[U]$ is its $\omega$-weight as $\cF$-structure. Any bijection $\sigma: U \to V$ may canonically be extended to a bijection
\[
	\sigma': U \cup \{ *_U\} \to V \cup \{ *_V\},
\]
and we set
\[
	\cF'[\sigma] = \cF[\sigma'].
\]
Thus, an $\cF'$-object with size $n$ is an $\cF$-object with size $n+1$, since we do not count the $*$-placeholder. The exponential generating series of $(\cF')^\omega$ is given by the formal derivative
\[
	(\cF')^\omega(z) = \frac{\text{d}}{\text{d}z} \cF^\omega(z).
\]

\subsection{Kolchin's representation theorem and Boltzmann distributions}

Given a weighted species $\cF^\omega$ and a parameter $y>0$ with $0 < \cF^\omega(y) < \infty$, we may consider the corresponding Boltzmann probability measure
\[
	\mathbb{P}_{\cF^\omega, y}(F) = \cF^\omega(y)^{-1} y^{|F|}\omega(F)/|F|!, \qquad F \in \mathscr{U}(\cF).
\]

In a certain sense, Boltzmann measures are invariant under relabelling: 

\begin{proposition}
Let $\mF$ follow a $\mathbb{P}_{\cF^\omega, y}$ distribution and let $N = |\mF|$ denote its random size. If we draw a permutation $\sigma: [N] \to [N]$ uniformly at random, then the corresponding relabelled object $\cF[\sigma](\mF)$ is also $\mathbb{P}_{\cF^\omega, y}$  distributed.
\end{proposition}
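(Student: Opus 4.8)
The plan is to reduce everything to the single observation that the Boltzmann weight $\mathbb{P}_{\cF^\omega, y}(F) = \cF^\omega(y)^{-1} y^{|F|}\omega(F)/|F|!$ depends on $F$ only through the pair $(|F|, \omega(F))$, combined with the fact that the relabelling maps $\cF[\sigma]$ preserve both quantities. Concretely, I would fix $n \in \ndN_0$ and a target object $G \in \cF[n]$, compute $\Pr{\cF[\sigma](\mF) = G}$, and check that it equals $\mathbb{P}_{\cF^\omega, y}(G)$.

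First I would record the group-action bookkeeping. By the functoriality axioms $\cF[\mathrm{id}_{[n]}] = \mathrm{id}_{\cF[n]}$ and $\cF[\tau \circ \rho] = \cF[\tau] \circ \cF[\rho]$, each permutation $\tau \in \Sym([n])$ induces a bijection $\cF[\tau]\colon \cF[n] \to \cF[n]$ with inverse $\cF[\tau^{-1}]$, and this bijection is weight-preserving, so $\omega(\cF[\tau](F)) = \omega(F)$ for every $F \in \cF[n]$. In particular the event $\{\cF[\sigma](\mF) = G\}$ forces $N = |\mF| = n$, and on that event it is equivalent to $\mF = \cF[\sigma^{-1}](G)$.

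Next I would unwind the joint law of $(\mF, \sigma)$. The phrase ``draw $\sigma\colon [N]\to[N]$ uniformly at random'' is interpreted as: conditionally on $N = n$, the permutation $\sigma$ is uniform on $\Sym([n])$ and independent of $\mF$. Hence
\begin{align*}
\Pr{\cF[\sigma](\mF) = G} &= \sum_{\tau \in \Sym([n])} \Pr{\sigma = \tau \mid N = n}\, \Pr{\mF = \cF[\tau^{-1}](G)} \\
&= \frac{1}{n!}\sum_{\tau \in \Sym([n])} \mathbb{P}_{\cF^\omega, y}\big(\cF[\tau^{-1}](G)\big).
\end{align*}
Since $\cF[\tau^{-1}](G) \in \cF[n]$ has the same size $n$ and, by weight-invariance, the same $\omega$-weight as $G$, every summand equals $\mathbb{P}_{\cF^\omega, y}(G) = \cF^\omega(y)^{-1} y^n \omega(G)/n!$; the $n!$ summands cancel the factor $1/n!$, which gives $\Pr{\cF[\sigma](\mF) = G} = \mathbb{P}_{\cF^\omega, y}(G)$, as desired.

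There is no genuine obstacle here: the statement is a soft consequence of the definitions. The only points that warrant care are the formal set-up of the randomness — making precise that $\sigma$ is drawn uniformly from $\Sym([N])$ after $N$ is revealed, so that conditionally on $\{N = n\}$ it is a uniform element of $\Sym([n])$ independent of $\mF$ — and the identity $\cF[\sigma^{-1}] = \cF[\sigma]^{-1}$; once these are in place the computation above is immediate. An essentially equivalent phrasing, which I might prefer for brevity, is to note that the conditional law of $\mF$ given $N = n$ is supported on $\cF[n]$ with probabilities proportional to $\omega$, hence invariant under the weight-preserving $\Sym([n])$-action, and therefore unchanged by averaging $\cF[\sigma]$ over $\sigma \in \Sym([n])$; since relabelling leaves $N$ fixed, the unconditional law is invariant as well.
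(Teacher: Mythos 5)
Your proof is correct, and its essential content is exactly the right observation: the Boltzmann weight $\cF^\omega(y)^{-1} y^{|F|}\omega(F)/|F|!$ depends on $F$ only through $(|F|,\omega(F))$, both of which are preserved by the weight-preserving bijections $\cF[\tau]\colon\cF[n]\to\cF[n]$ that the functoriality axioms produce for each $\tau\in\Sym([n])$. Your bookkeeping — conditioning on $N=n$, using independence of $\sigma$ from $\mF$ given $N=n$, and rewriting the event $\{\cF[\sigma](\mF)=G\}$ as $\{\mF=\cF[\sigma^{-1}](G)\}$ via $\cF[\sigma]^{-1}=\cF[\sigma^{-1}]$ — is sound, and the $n!$ identical summands do cancel the $1/n!$ to yield $\mathbb{P}_{\cF^\omega,y}(G)$.

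For comparison: the paper states this proposition without proof; it is treated as an immediate consequence of the definitions, and the only place its content is invoked is the first paragraph of the proof of Theorem~\ref{te:main}, where relabelling-invariance is used to reduce total-variation convergence to convergence of isomorphism types. So there is no "paper proof" to diverge from. Your write-up supplies precisely the argument the author left implicit, and your closing one-line rephrasing (the conditional law of $\mF$ given $N=n$ is $\omega$-proportional on $\cF[n]$, hence invariant under the weight-preserving $\Sym([n])$-action) is in fact the cleaner formulation and the one I would keep.
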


The Boltzmann distribution for composite structures admits a useful canonical coupling, which is a combinatorial interpretation of Kolchin's representation theorem \cite[Thm. 1.2]{MR2245368}, and also known as the substitution rule for Boltzmann samplers. It is constructed in \cite{MR2810913} for species without weights, and the generalization to the weighted setting is straight-forward.

\begin{lemma}[\cite{MR2810913}]
	\label{le:composition}
	Let $\cF^\omega$ and $\cG^\nu$ be weighted species with $\cG[\emptyset] = \emptyset$. Let $x>0$ be a parameter with $0< \cF^\omega(\cG^\nu(x)) < \infty$ and $y := \cG^\nu(x)< \infty$. If we sample a $\mathbb{P}_{\cF^\omega, y}$-distributed $\cF$-object $\mF$, and for each $1 \le i \le |\mF|$ and independent $\mathbb{P}_{\cG^\nu, x}$-distributed $\cG$-object $\mG_i$, then the tupel $(\mF, \mG_1, \ldots, \mG_{|\mF|})$ may be interpreted as an $\cF \circ \cG$-object $\mS$ on the disjoint union
	\[
		V = \bigsqcup_{1 \le i \le |\mF_i|} [|\mG_i|].
	\]
	Let $\sigma: V \to [|V|]$ be a uniformly at random sampled bijection. Then the relabelled object
	\[
		(\cF \circ \cG)[\sigma](\mS) \in \mathscr{U}(\cF \circ \cG)
	\]
	follows a $\mathbb{P}_{\cF^\omega \circ \cG^\nu, x}$-distribution.
\end{lemma}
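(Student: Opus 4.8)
The plan is to verify the asserted distributional identity pointwise. Fix a composite structure $T \in (\cF \circ \cG)[n]$; I would show that
\[
\Pr{(\cF \circ \cG)[\sigma](\mS) = T} \;=\; \frac{x^n\,\mu(T)}{n!\,\cF^\omega(\cG^\nu(x))},
\]
where $\mu$ is the composition weight from the definition of $\cF \circ \cG$. Since $(\cF^\omega \circ \cG^\nu)(z) = \cF^\omega(\cG^\nu(z))$, the right-hand side is exactly $\mathbb{P}_{\cF^\omega \circ \cG^\nu,\,x}(T)$, and as Boltzmann weights sum to one over $\mathscr{U}(\cF \circ \cG)$, this pointwise equality is all that is needed. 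Write $T = (F_0,(G_Q)_{Q \in \rho})$ with $\rho = \{R_1,\dots,R_m\}$ an unordered partition of $[n]$, $F_0 \in \cF[\rho]$ and $G_Q \in \cG[Q]$, so $\mu(T) = \omega(F_0)\prod_{Q \in \rho}\nu(G_Q)$. I would then condition on $\mF$ and $\mG_1,\dots,\mG_{|\mF|}$: given these, $\mS$ is the deterministic $(\cF\circ\cG)$-object on $V = \bigsqcup_i[\,|\mG_i|\,]$ whose block partition consists of the copies $Q_i = \{i\}\times[\,|\mG_i|\,]$, whose $\cF$-structure is $\mF$ transported along $i \mapsto Q_i$, and whose $\cG$-structure on $Q_i$ is $\mG_i$ transported along the canonical bijection $[\,|\mG_i|\,]\to Q_i$. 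The remaining task is to count, for fixed $\mF$ and $(\mG_i)_i$, the bijections $\sigma\colon V\to[n]$ with $(\cF\circ\cG)[\sigma](\mS) = T$, average $\Pr{\mF}\prod_i\Pr{\mG_i}$ against this count, and divide by $n!$.

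The structural heart of the argument, and the step I expect to need the most care, is the following: a bijection $\sigma$ with $(\cF\circ\cG)[\sigma](\mS) = T$ must send the block partition of $\mS$ onto $\rho$, so it amounts to the data of a bijection $w\colon[\,|\mF|\,]\to[m]$ recording which block $R_{w(i)}$ of $\rho$ receives $Q_i$ (this forces $|\mF| = m$ and $|\mG_i| = |R_{w(i)}|$) together with, for each $i$, a bijection $\psi_i\colon[\,|\mG_i|\,]\to R_{w(i)}$; conversely each such pair $(w,(\psi_i)_i)$ produces a unique admissible $\sigma$. Substituting this into the explicit formula for $(\cF\circ\cG)[\sigma]$ recalled above, the single equation $(\cF\circ\cG)[\sigma](\mS)=T$ decouples into $\cF[\phi_w](\mF) = F_0$, where $\phi_w\colon[m]\to\rho$ is the map $i\mapsto R_{w(i)}$ and depends only on $w$, together with $\cG[\psi_i](\mG_i) = G_{R_{w(i)}}$ for every $i$. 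This is pure bookkeeping, but one has to match the paper's transport convention for composite structures carefully and check that the constraint on the $\cF$-part and the constraints on the individual $\cG$-parts genuinely separate.

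Granting this, the rest is a routine summation. Interchanging the order of summation, I would sum over $w \in \Sym(m)$ first. For fixed $w$, the constraint $\cF[\phi_w](\mF) = F_0$ pins $\mF$ down to $\cF[\phi_w^{-1}](F_0)$, whose Boltzmann weight is $y^m\omega(F_0)/(m!\,\cF^\omega(y))$ with $y := \cG^\nu(x)$ --- the weight being $\omega(F_0)$ because relabelling preserves $\omega$-weights --- while for each $i$ the joint sum over $\mG_i$ and over the $|R_{w(i)}|!$ admissible choices of $\psi_i$ collapses to $x^{|R_{w(i)}|}\nu(G_{R_{w(i)}})/\cG^\nu(x)$, the factorials cancelling for the same reason. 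The product of the $\mF$-factor and the $m$ block-factors equals $x^n\mu(T)/(m!\,\cF^\omega(y))$ and is independent of $w$; summing over the $m!$ permutations $w$ and dividing by $n!$ yields $x^n\mu(T)/(n!\,\cF^\omega(y)) = \mathbb{P}_{\cF^\omega\circ\cG^\nu,x}(T)$. Heuristically, the $m!$ reorderings of the blocks of $\rho$ absorb the $1/m!$ from the law of $\mF$, and the $|R_{w(i)}|!$ relabellings within each block absorb the factorials from the laws of the $\mG_i$, leaving exactly the weight $\mu(T)$ and the power $x^n$.
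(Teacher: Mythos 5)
Your proof is correct, and it is worth noting at the outset that the paper does not actually supply a proof of this lemma: it cites the construction in \cite{MR2810913} for unweighted species and remarks only that "the generalization to the weighted setting is straight-forward." So there is no internal proof to compare against; your argument fills in exactly the details the paper defers to the reference.

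The computation itself is sound. The decomposition of an admissible relabelling $\sigma$ into a block permutation $w$ and within-block bijections $\psi_i$ is the right parameterization, and you correctly observe that the constraint $(\cF\circ\cG)[\sigma](\mS)=T$ separates into an $\cF$-constraint pinning down $\mF=\cF[\phi_w^{-1}](F_0)$ and $\cG$-constraints pinning down each $\mG_i=\cG[\psi_i^{-1}](G_{R_{w(i)}})$. The weight preservation $\omega(\cF[\phi_w^{-1}](F_0))=\omega(F_0)$ and $\nu(\cG[\psi_i^{-1}](G_{R_{w(i)}}))=\nu(G_{R_{w(i)}})$ is precisely the commuting-diagram axiom of a weighted species, so using it here is legitimate. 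The bookkeeping then works out cleanly: the $m!$ choices of $w$ cancel the $1/m!$ in the Boltzmann law of $\mF$, the $|R_{w(i)}|!$ choices of $\psi_i$ cancel the factorials in the laws of the $\mG_i$, and the $y^m=\cG^\nu(x)^m$ from the $\mF$-law cancels the $\cG^\nu(x)^{-m}$ accumulated from the $\mG_i$-laws, leaving $x^n\mu(T)/(n!\,\cF^\omega(\cG^\nu(x)))=\mathbb{P}_{\cF^\omega\circ\cG^\nu,x}(T)$. One could tidy the exposition by stating up front that you are summing the joint law of $(\mF,(\mG_i)_i,\sigma)$ over the finite set of triples compatible with $T$, and that this set is parameterized bijectively by $(w,(\psi_i)_i)$; but that is already implicit in what you wrote, and the argument is complete as it stands.
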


\subsection{Graph classes}
A {\em simple finite graphs} is a pair $
G = (V,E)
$
of a finite set $V = V(G)$ of {\em vertices} or {\em labels}  together with a set $E= E(G)$ of {\em edges} \[E \subset \{ \{x,y\} \mid x,y \in V, x \ne y\}.\]
To avoid notational ambiguities we assume additionally that $V \cap E = \emptyset$. Two vertices $x,y \in V$ are {\em adjacent}, if $\{x,y\} \in E$. We also say that $y$ is a {\em neighbour} of $x$. A {\em subgraph} of $G$ is a  graph $H$ with $V(H) \subset V(G)$. We say $H$ is a {\em proper subgraph}, if additionally $H \ne G$.

We term $G$ {\em connected}, if its vertex set is non-empty, and for all $x,y \in V$ we can reach $y$ by starting at $x$ and traversing edges.  A {\em connected component} of $G$ is a connected subgraph $H$ that is maximal with this property. That is, no other connected subgraph of $G$ exists that contains $H$ as a proper subgraph.  We say $G$ is $2$-connected, if $G$ is connected, has at least $3$ vertices, and deleting an arbitrary single vertex does not disconnect the graph. A subgraph $H$ of $G$ is termed a {\em block}, if it is either an isolated vertex with no neighbours, or two vertices joined by a single edges whose deletion would increase the number of connected components, or a $2$-connected subgraph that is maximal with this property. 

For each bijection
$
\sigma: V \to U
$
between the vertex set of $G$ and an arbitrary finite set $U$  we may form the {\em relabelled graph} 
\[
\sigma.G := (U, \{ \{\sigma(x), \sigma(y)\} \mid \{x,y\} \in E(G)\}.
\]
Two graphs are termed {\em isomorphic} if one is a relabelled version of the other.  For any edge $e = \{x,y\} \in E(G)$ we may form a new graph $G/e$ by {\em contracting}  $e$. The graph $G/e$ is formed by replacing $x$ and $y$ with a single vertex $v_{x,y}$ that is adjacent to all former neighbours of $x$ and $y$. A graph $H$ is a {\em minor} of a graph $G$, if there are graphs $G_0, \ldots, G_t$ such that $G_0 \simeq G$ and $G_t \simeq H$ and for each $G_{i+1}$ arises from $G_i$ by deleting an edge, contracting an edge, or deleting a vertex.

A collection $\cA$ of graphs that is closed under relabelling is termed a {\em graph class}. For notational convenience, we will always assume that $\cA$ contains the trivial graph whose vertex-set is the empty set. We may interpret $\cA$ as a combinatorial species by letting, for each finite set $V$, $\cA[V] \subset \cA$ denote the finite subset of all graphs in $\cA$ with vertex set $V$, and defining
\[
\cA[\sigma]: \cA[V] \to \cA[U], G \mapsto \sigma.G
\]
for each finite set $U$ and bijection $\sigma: V \to U$. Moreover, we assign weight $1$ to each $\cA$-object. %It is custom to set $\cA_n = \cA[n]$ for each $n \in \ndN_0$, although there is a slight risk of confusing when considering families $(\cA_i)_i$ of species. 

We say a graph class $\cA$ is
\begin{enumerate}
	\item {\em proper}, if there exists a graph that is not contained in $\cA$.
	\item {\em small}, if the radius of convergence of the exponential generating series $\cA(z)$ is positive.
	\item {\em decomposable}, if any graph lies in $\cA$ if and only if all its connected components do.
	\item {\em bridge-addable}, if, for each graph $G \in \cA$ and each pair of vertices $x,y \in V(G)$ contained in different connected components of $G$, the graph obtained by adding the edge $\{x,y\}$ to $G$ also belongs to $\cA$.
	\item {\em addable}, if it is both decomposable and bridge-addable.
	\item {\em minor-closed}, if for each $G \in \cA$ and each minor $H$ of $G$ it also holds that $H \in A$.
	\item {\em block-stable}, if it contains the graph consisting of a single vertex, and any graph lies in $\cA$ if and only if all its blocks do.
	\item {\em smooth}, if its generating series $\cA(z)$ has a finite positive radius of convergence and satisfies the ratio test.
\end{enumerate}

If $\cA$ is decomposable and $\cC \subset \cA$ denotes the subclass of all connected graphs in $\cA$, then the two species are related by a canonical isomorphism
\begin{align}
\label{eq:setiso}
\cA \simeq \Set \circ \cC.
\end{align}
This expresses the fact the connected components of a graph form a partition of the vertex set of the graph, and any combination of connected graphs from $\cC$ must lie in $\cA$, since $\cA$ is decomposable. 

If the graph class $\cA$ is block-stable, then it is also decomposable. We may consider the subclass $\cB \subset \cC$ of all graphs in $\cC$ that are $2$-connected or consist of two vertices joined by a single edge. It was noted  by Harary and Palmer \cite[1.3.3, 8.7.1]{MR0357214},  Robinson \cite[Thm. 4]{MR0284380}, and Labelle \cite[2.10]{MR699986} that
\begin{align}
\label{eq:simple}
z \cC'(z) = z \phi(z \cC'(z))
\qquad \text{with} \qquad
\phi(z) = \exp(\cB'(z)).
\end{align}

%There is a combinatorial interpretation behind Equation \eqref{eq:simple}, which we however will not use. We refer the inclined reader to the cited literature.

It is well-known that all minor-closed classes are block-stable. To see this, suppose that $\cA$ is a minor-closed graph class. An {\em excluded minor} of $\cA$ is a graph that does not belong to $\cA$, but all its proper minors do. If $\cM$ denotes the collection of excluded minors of $\cA$, then a graph lies in $\cA$ if and only if none of its minors belongs to $\cM$. A moment's though verifies that $\cA$ is decomposable, if and only if all excluded minors are connected, and addable, if and only if all excluded minors are $2$-connected. Graph classes defined by excluding $2$-connected minors must be block-stable, because every $2$-connected subgraph of a graph $G$ is a subgraph of one of its blocks.

It was shown by Norine, Seymour, Thomas and Wollan \cite{MR2236510} that proper minor-closed classes are small. Thus proper addable minor-closed classes are examples of small block-stable classes containing all trees.

\subsection{Subexponential sequences}

We  consider power series whose coefficients belong to the family of subexponential sequences studied by Chover, Ney and Wainger \cite{MR0348393} and Embrechts \cite{MR714482}. Up to tilting and rescaling, these sequences corresond to subexponential densities of random variables with values in a lattice. Hence they may be put into the more general context of heavy-tailed and subexponential distributions, for which a comprehensive treatment is given in the book by Foss, Korshunov, and Zachary \cite{MR3097424}.  %Throughout, the coefficient of $z^n$ in a power series $g(z)$ is denoted by $[z^n]g(z)$.

\begin{definition}
	Let $d \ge 1$ be an integer. A power series $g(z) = \sum_{n =0}^\infty g_n z^n$ with non-negative coefficients and radius of convergence $\rho >0$ belongs to the class $\mathscr{S}_d$, if $g_n=0$ whenever $n$ is not divisible by $d$, and
	\begin{align}
	\label{eq:condition}
	\frac{g_n}{g_{n+d}} \sim \rho^d, \qquad \frac{1}{g_n}\sum_{i+j=n}g_ig_j \sim 2 g(\rho) < \infty
	\end{align}
	as $n \equiv 0 \mod d$ becomes large.
\end{definition}
The following theorem describes the behaviour of randomly stopped sums.

\begin{theorem}[{\cite[Thm. 4.8, 4.30]{MR3097424}}]
	\label{te:haupt}
	If $g(z)$ belongs to $\mathscr{S}_d$ with radius of convergence $\rho$, and $f(z)$ is a non-constant power series with non-negative coefficients that is analytic at $\rho$, then $f(g(z))$ belongs to $\mathscr{S}_d$ and
	\[
		[z^n] f(g(z)) \sim f'(g(\rho)) [z^n]g(z), \qquad n \to \infty, \qquad n \equiv 0 \mod d.
	\]
\end{theorem}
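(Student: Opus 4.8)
The plan is to renormalise the problem into a statement about locally subexponential lattice densities, prove that statement by dominated convergence applied to a power‑series expansion, and then read off membership in $\mathscr{S}_d$ by invoking the resulting asymptotic equivalence a second time.

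\emph{Reduction.} Since $f(g(z))$ is a power series in $g(z)$, it is supported on multiples of $d$, so passing to the sequence $(g_{md})_m$ reduces the problem to $d=1$, and rescaling the variable reduces it further to $\rho=1$. The second relation in \eqref{eq:condition} gives $g(\rho)<\infty$, hence $q_m:=g_{md}\rho^{md}$ is summable and $p_m:=q_m/g(\rho)$ is a probability distribution on $\ndN_0$; its generating function $G$ satisfies $p_m/p_{m+1}\to1$ and $\big(\sum_{i+j=m}p_ip_j\big)/p_m\to2$ by the two relations in \eqref{eq:condition}, i.e.\ $(p_m)$ is a long‑tailed, locally subexponential density. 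Writing $\tilde f(w):=f(g(\rho)w)$ one has $f(g(z))=\tilde f\big(G((z/\rho)^d)\big)$, and $\tilde f$ is non‑constant with non‑negative coefficients and radius of convergence $R>1$: indeed $f$ has non‑negative coefficients, so by Pringsheim's theorem its analyticity at $g(\rho)$ forces $g(\rho)$ to lie strictly inside the disk of convergence of $f$. Thus everything reduces to the single asymptotic equivalence $[z^m]\tilde f(G(z))\sim\tilde f'(1)\,p_m$, which after undoing the rescalings becomes $[z^n]f(g(z))\sim f'(g(\rho))\,[z^n]g(z)$ (using $\tilde f'(1)=g(\rho)f'(g(\rho))>0$, which holds as $f$ is non‑constant with non‑negative coefficients and $g(\rho)>0$).

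\emph{The core estimate.} Here I would use the lattice‑density versions of the two classical subexponential facts \cite[Thms.~4.8, 4.30]{MR3097424}: for the density $(p_m)$, (a)~$p^{*k}_m\sim k\,p_m$ as $m\to\infty$ for each fixed $k\ge1$, and (b)~for every $\epsilon>0$ there is $K_\epsilon<\infty$ with $p^{*k}_m\le K_\epsilon(1+\epsilon)^k p_m$ for all $m,k$; both are proved by induction on $k$, the step in (a) splitting $p^{*k}_m=\sum_j p^{*(k-1)}_j p_{m-j}$ at a slowly growing cut‑off and using that $p_{m-j}/p_m\to1$ uniformly over bounded ranges of $j$. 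Granting these, expand $\tilde f(w)=\sum_{k\ge0}\tilde c_k w^k$ with $\tilde c_k\ge0$, so that $[z^m]\tilde f(G(z))/p_m=\sum_{k\ge0}\tilde c_k\,p^{*k}_m/p_m$. Choosing $\epsilon>0$ with $1+\epsilon<R$ makes $\sum_k\tilde c_k(1+\epsilon)^k=\tilde f(1+\epsilon)<\infty$, so by (b) the terms of this sum are dominated, uniformly in $m$, by the summable sequence $k\mapsto\tilde c_k K_\epsilon(1+\epsilon)^k$; dominated convergence together with (a) then gives $[z^m]\tilde f(G(z))/p_m\to\sum_{k\ge1}k\,\tilde c_k=\tilde f'(1)$, which is the required equivalence.

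\emph{Membership in $\mathscr{S}_d$.} Finally $f(g(z))$ is supported on multiples of $d$; its first relation in \eqref{eq:condition} follows from the asymptotics just obtained together with the same relation for $g$; and for the second relation one applies the equivalence once more with $f^2$ in place of $f$ — still non‑constant, with non‑negative coefficients, and analytic at $g(\rho)$ — to obtain $\sum_{i+j=n}[z^i]f(g(z))\,[z^j]f(g(z))=[z^n]\big(f(g(z))\big)^2\sim 2f(g(\rho))f'(g(\rho))\,[z^n]g(z)$, whence dividing by $[z^n]f(g(z))$ yields the limit $2f(g(\rho))=2(f\circ g)(\rho)$. The one genuinely technical point is establishing (a) in this local form; everything else is bookkeeping around \eqref{eq:condition} and dominated convergence.
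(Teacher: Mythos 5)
The paper does not prove Theorem~\ref{te:haupt}; it cites it from Foss, Korshunov, and Zachary \cite{MR3097424}, so there is no in-paper proof to compare against. Your proof is nonetheless a sound and fairly standard derivation of the cited result. The reduction to $d=1$, $\rho=1$ by passing to the tilted probability weights $p_m = g_{md}\rho^{md}/g(\rho)$ is correct, and the Pringsheim observation rightly gives that $\tilde f$ has radius of convergence strictly larger than $1$, which is exactly what is needed to choose $\epsilon$ with $1+\epsilon < R$. The core dominated-convergence argument — using the single-term asymptotics $p^{*k}_m \sim k\,p_m$ for fixed $k$ and Kesten's uniform bound $p^{*k}_m \le K_\epsilon(1+\epsilon)^k p_m$ — is precisely the standard route for randomly stopped sums of subexponential lattice variables, and your bookkeeping $\tilde f'(1) = g(\rho)f'(g(\rho))$ correctly converts back to the stated asymptotic. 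Applying the main equivalence with $f^2$ in place of $f$ to verify the second condition in~\eqref{eq:condition} is a clean trick. Two minor points worth making explicit if this were to stand as a full proof: (i) the definition of $\mathscr{S}_d$ involves the radius of convergence of $f(g(z))$, so you should note (as follows immediately from your asymptotic equivalence) that this radius equals $\rho$; and (ii) facts (a) and (b), which you correctly flag as the only genuinely technical content, are themselves the substance of the cited theorems in \cite{MR3097424}, so at that level the argument is circular unless you actually carry out the inductions you sketch. As a blind reconstruction of the cited result, however, the structure is correct.
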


The broad scope of this setting is illustrated by the following easy observation, which has been noted in various places, see for example \cite{MR772907}.
\begin{proposition}
	\label{pro:easy}
 If $g_n = h(n) n^{-\beta} \rho^{-n}$ for some constants $\rho>0$, $ \beta > 1$ and a slowly varying function $h$, then the series $\sum_{n \in d\ndN} g_n z^n$ belongs to the class $\mathscr{S}_d$.
\end{proposition}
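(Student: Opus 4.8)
The plan is to verify the two conditions in \eqref{eq:condition} directly; write $n = dm$ with $m \in \ndN$ throughout, and set $g(z) := \sum_{n \in d\ndN} g_n z^n$. First note that membership in $\mathscr{S}_d$ requires $g_n = 0$ for $d \nmid n$, which holds by construction. Since $h$ is slowly varying we have $h(n)^{1/n} \to 1$, hence $g_n^{1/n} \to \rho^{-1}$ and the radius of convergence of $g$ equals $\rho$; moreover, since slowly varying functions grow slower than any positive power of $n$ and $\beta > 1$, the value $g(\rho) = \sum_{m \ge 1} h(dm)(dm)^{-\beta}$ is finite and positive.

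For the first condition a direct computation gives
\[
\frac{g_n}{g_{n+d}} = \frac{h(n)}{h(n+d)}\left(\frac{n+d}{n}\right)^{\beta}\rho^{d},
\]
and both factors in front of $\rho^d$ tend to $1$ as $n \to \infty$: the second obviously, and the first because the ratio $(n+d)/n$ converges to $1$ and slowly varying functions converge uniformly on compact subsets of $\ndR_{>0}$ (the uniform convergence theorem). Hence $g_n/g_{n+d} \to \rho^d$.

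For the second condition I would pass to the tilted and normalised sequence $\pi_m := g_{dm}\rho^{dm}/g(\rho)$, which is a probability distribution on $\ndN$ with $\pi_m = c\, h(dm)\, m^{-\beta}$, where $c = d^{-\beta}/g(\rho)$. Since $g$ has no constant term, setting $\pi_0 := 0$ and unfolding the definitions gives the identity
\[
\frac{1}{g_n}\sum_{i+j=n} g_i g_j = g(\rho)\,\frac{(\pi \ast \pi)_m}{\pi_m}, \qquad (\pi\ast\pi)_m = \sum_{k=0}^{m}\pi_k\pi_{m-k},
\]
so it remains to prove the local subexponentiality estimate $(\pi\ast\pi)_m \sim 2\pi_m$. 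For this I would fix a large $K$ and split the convolution into the ranges $k \le K$, $K < k < m-K$, and $k \ge m-K$. For each fixed $k$ one has $\pi_{m-k}/\pi_m \to 1$ (again by slow variation of $h$), so the first and third ranges each contribute $\sum_{k \le K}\pi_k$ in the limit $m \to \infty$. For the middle range, Potter's bounds give a uniform estimate $\pi_{m-k} \le C\,\pi_m$ valid for all $k \le m/2$ and all large $m$, whence, using the symmetry $k \leftrightarrow m-k$, the middle range is at most $2C\,\pi_m\sum_{k>K}\pi_k$. Letting first $m \to \infty$ and then $K \to \infty$, so that $\sum_{k\le K}\pi_k \to 1$ and $\sum_{k>K}\pi_k \to 0$, forces $(\pi\ast\pi)_m/\pi_m \to 2$, and the identity above then yields the convolution condition with limit $2g(\rho)$.

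The only delicate point is controlling the ratio $h(d(m-k))/h(dm)$ in two regimes simultaneously: pointwise in $k$ through the uniform convergence theorem, and uniformly over $k \le m/2$ through Potter's bounds. Everything else is bookkeeping. Alternatively, one may observe that $(\pi_m)$ has a regularly varying tail of index $1-\beta < 0$, hence is locally subexponential, and invoke the corresponding statement from \cite{MR3097424}; but the elementary splitting argument above is short enough to carry out in full, which matches the claim that this is an easy observation.
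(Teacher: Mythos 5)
The paper does not supply a proof of Proposition~\ref{pro:easy}: it is introduced as an ``easy observation'' and the reader is referred to the literature (Embrechts and Omey, \cite{MR772907}). So there is no in-paper argument to compare against, and the question is whether your blind fill-in is sound.

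It is. Your verification of the ratio condition is exactly right: write $g_n/g_{n+d}$ as $\frac{h(n)}{h(n+d)}\bigl(\frac{n+d}{n}\bigr)^{\beta}\rho^d$ and use the uniform convergence theorem to send the first factor to $1$. For the convolution condition, the reduction to the tilted probability sequence $\pi_m = g_{dm}\rho^{dm}/g(\rho)$, the observation that $g(\rho) < \infty$ because $\beta>1$ and $h$ grows slower than any power, the identity $\frac{1}{g_n}\sum_{i+j=n}g_ig_j = g(\rho)\,(\pi\ast\pi)_m/\pi_m$ (with $\pi_0 = 0$ since the series has no constant term), and the three-range split with the uniform convergence theorem on the ends and Potter's bounds in the middle (giving $\pi_{m-k}\le C\pi_m$ uniformly for $k\le m/2$, hence a middle contribution at most $2C\sum_{k>K}\pi_k$ by the symmetry $k\leftrightarrow m-k$) together pin the limit at $2$, and hence the convolution limit at $2g(\rho)$. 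This is the standard proof of local subexponentiality for regularly varying sequences, and it correctly instantiates the citation the paper leans on. One would only ask that, when concluding, the $\liminf$ and $\limsup$ be squeezed explicitly — you have a lower bound $2\sum_{k\le K}\pi_k$ from the two ends and an upper bound adding $2C\sum_{k>K}\pi_k$ from the middle, and letting $K\to\infty$ closes the gap — but that is precisely the ``bookkeeping'' you defer, and it works.
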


We will make use of the following criterion related to sums of random variables.
\begin{lemma}[{\cite[Thm. 4.9]{MR3097424}}]
	\label{le:help}
	Let $f(z)$ belong to $\mathscr{S}_1$ with radius of convergence $\rho$, and $g_1(z), g_2(z)$ be power-series with non-negative coefficients. If
	\[
		\frac{[z^n]g_1(z)}{[z^n]f(z)} \to c_1, \qquad \text{and} \qquad  \frac{[z^n]g_2(z)}{[z^n]f(z)} \to c_2
	\]
	as $n \to \infty$ with $c_1,c_2 \ge 0$, then
	\[
		 \frac{[z^n]g_1(z)g_2(z)}{[z^n]f(z)} \to c_1g_2(\rho) + c_2g_1(\rho).
	\]
	If additionally $c_1g_2(\rho) + c_2g_1(\rho)>0$, then $g_1(z)g_2(z)$ belongs to $\mathscr{S}_1$.
\end{lemma}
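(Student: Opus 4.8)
The plan is to work directly with the convolution $[z^n]g_1(z)g_2(z) = \sum_{i+j=n}a_ib_j$, where I abbreviate $f_n = [z^n]f(z)$, $a_i = [z^i]g_1(z)$ and $b_j = [z^j]g_2(z)$, and to exploit the fact that $f\in\mathscr{S}_1$ forces the mass of this convolution to concentrate on the pairs $(i,j)$ in which one coordinate stays bounded. First I would record the elementary consequences of the hypotheses: since $f\in\mathscr{S}_1$, its coefficients are eventually positive, $f_n/f_{n+1}\to\rho$, and iterating the ratio property gives $f_{n-i}/f_n\to\rho^i$ for each fixed $i$; moreover $\sum_n f_n\rho^n = f(\rho)<\infty$. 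From $a_n/f_n\to c_1$ and $b_n/f_n\to c_2$ it follows that there are an index $i_0$ and a constant $C$ with $a_i\le Cf_i$ and $b_i\le Cf_i$ for all $i\ge i_0$, and that $g_1(\rho)=\sum_i a_i\rho^i$ and $g_2(\rho)=\sum_j b_j\rho^j$ are finite.

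Next, fix $K\ge i_0$ and, for $n>2K$, split $\sum_{i+j=n}a_ib_j$ into the three ranges $i\le K$, $j\le K$, and $K<i<n-K$. In the first range, writing $a_ib_{n-i}/f_n = a_i\,(b_{n-i}/f_{n-i})(f_{n-i}/f_n)$ and letting $n\to\infty$ with $i$ fixed gives $f_n^{-1}\sum_{i\le K}a_ib_{n-i}\to c_2\sum_{i\le K}a_i\rho^i$, and symmetrically the second range tends to $c_1\sum_{j\le K}b_j\rho^j$. For the middle range I would use the domination $a_ib_{n-i}\le C^2f_if_{n-i}$, which is valid since both $i$ and $n-i$ exceed $K\ge i_0$, together with the self-convolution asymptotics of $f$: from $\sum_{K<i<n-K}f_if_{n-i} = \sum_{i+j=n}f_if_j - \sum_{i\le K}f_if_{n-i} - \sum_{j\le K}f_jf_{n-j}$ and division by $f_n$ one gets
\[
\limsup_{n\to\infty}\frac{1}{f_n}\sum_{K<i<n-K}a_ib_{n-i} \;\le\; C^2\Bigl(2f(\rho)-2\sum_{i\le K}f_i\rho^i\Bigr),
\]
which tends to $0$ as $K\to\infty$. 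An $\varepsilon/3$-type exchange of the limits $n\to\infty$ and $K\to\infty$ then yields $f_n^{-1}[z^n]g_1(z)g_2(z)\to c_1g_2(\rho)+c_2g_1(\rho)$.

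For the membership claim, set $L=c_1g_2(\rho)+c_2g_1(\rho)>0$ and $h_n=[z^n]g_1(z)g_2(z)$, so that $h_n\sim Lf_n$; in particular the radius of convergence of $g_1g_2$ equals $\rho$ and $h_n/h_{n+1}\to\rho$. Applying the first part of the lemma to the pair $(g_1g_2,\,g_1g_2)$ --- legitimate because $h_n/f_n\to L$ --- gives $f_n^{-1}\sum_{i+j=n}h_ih_j\to 2L\,g_1(\rho)g_2(\rho)$, hence $h_n^{-1}\sum_{i+j=n}h_ih_j\to 2g_1(\rho)g_2(\rho)=2(g_1g_2)(\rho)<\infty$, which is precisely the second defining condition of $\mathscr{S}_1$ (the divisibility condition being vacuous for $d=1$). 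The one genuinely delicate point in the whole argument is the estimate for the middle range: this is where subexponentiality of $f$ is used in an essential way, and it is what makes the crude bounds $a_i=O(f_i)$ and $b_i=O(f_i)$ precisely strong enough; everything else is bookkeeping with the iterated ratio property and the interchange of the two limits.
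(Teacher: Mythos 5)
The paper does not give a proof of this lemma at all; it is imported verbatim as a citation of Theorem~4.9 in Foss, Korshunov and Zachary. Your argument, on the other hand, is a complete and correct self-contained proof, and it follows what is essentially the standard route for subexponential convolution estimates: split the convolution $\sum_{i+j=n}a_ib_j$ at a fixed cutoff $K$, treat the two boundary zones $i\le K$ and $j\le K$ by the iterated ratio property $f_{n-i}/f_n\to\rho^i$ together with $a_n/f_n\to c_1$, $b_n/f_n\to c_2$, and kill the middle zone $K<i<n-K$ by the domination $a_ib_{n-i}\le C^2 f_if_{n-i}$ combined with the defining self-convolution asymptotics of $f\in\mathscr{S}_1$. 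The bookkeeping is right: for $n>2K$ the three ranges really do partition $\{0,\dots,n\}$, the bound $C^2\bigl(2f(\rho)-2\sum_{i\le K}f_i\rho^i\bigr)$ on the middle-zone $\limsup$ does vanish as $K\to\infty$, and the $\varepsilon/3$ interchange is legitimate. The bootstrapping step for the membership claim, applying the first part to the pair $(g_1g_2,g_1g_2)$ and then dividing by $h_n/f_n\to L$, is also sound, and you correctly note that the radius of convergence of $g_1g_2$ equals $\rho$ and that the divisibility condition is vacuous when $d=1$. There is nothing to compare against in the paper, but your proof would serve perfectly well as a replacement for the external citation.
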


\section{Convergent Gibbs partitions}
\label{sec:conv}
Suppose that we are given combinatorial species $\cF^\omega$ and $\cG^\nu$ with $\cG[\emptyset] = \emptyset$ and $[z^k]\cF^\omega(z) >0$ for at least one $k \ge 1$. For each integer $n \ge 0$ with $[z^n] (\cF^\omega \circ \cG^\nu)(z) > 0$ we may sample a random composite structure 
\[
\mS_n = (\mF_n, (\mG_Q)_{Q \in \pi_n})
\] from the set $(\cF \circ \cG)[n]$ with probability proportional to its weight. The corresponding random partition $\pi_n$ of the set $[n]$  is termed a {\em Gibbs partition}.  We assume throughout that $(\cF^\omega \circ \cG^\nu)(z)$ is not a polynomial, so that we may  study $\mS_n$ as $n$ tends to infinity.

We are interested in the behaviour of the remainder $\mR_n$ when deleting "the" largest component from $\mS_n$.  More specifically, we construct $\mR_n$ as follows. We make a uniform choice of a component $Q_0 \in \pi_n$ having maximal size, and let $\mF_n'$ denote the $\cF'$-object obtained from the $\cF$-object $\mF_n$ by relabeling the $Q_0$ atom of $\mF_n$ to a $*$-placeholder. In more formal words, we set \[\mF_n' = \cF[\gamma](\mF_n) \in \cF'[\pi_n \setminus \{Q_0\}]\] for the  bijection $\gamma: \pi_n \to (\pi_n \setminus \{Q_0\}) \cup \{*\}$ with $\gamma(Q_0)=*$ and $\gamma(Q) = Q$ for $Q \ne Q_0$. This yields an $\cF' \circ \cG$-object
\[
	(\mF_n', (\mG_Q)_{Q \in \pi_n \setminus \{Q_0\}}) \in (\cF' \circ \cG)[[n] \setminus Q_0].
\]
We are not interested in the precise content of the underlying set $[n] \setminus Q_0$. Any $(n - |Q_0|)$-sized subset of $[n]$ is equally likely.  Hence we define the unique order-preserving map 
\[
\sigma: [n] \setminus \{Q_0\} \to [n - |Q_0|]
\]
and set
\begin{align*}
%	\label{eq:r}
	\mR_n = (\cF' \circ \cG)[\sigma](\mF_n', (\mG_Q)_{Q \in \pi_n \setminus \{Q_0\}}) \in \mathscr{U}(\cF' \circ \cG).
\end{align*}
Alternatively, we could have chosen $\sigma$ uniformly at random, it wouldn't have changed the {\em distribution} of the outcome. We let $\mu$ denote the weighting on $ (\cF')^\omega \circ \cG^\nu$, that is, 
\begin{align*}
%	\label{eq:mu}
	\mu(F, (G_Q)_Q) = \omega(F) \prod_Q \nu(Q), \qquad (F, (G_Q)_Q) \in \mathscr{U}(\cF' \circ \cG).
\end{align*}

In a very general setting the remainder $\mR_n$ converges in total variation toward a limit object  following a Boltzmann distribution. %In particular, the random composite structure $\mS_n$ has then typically has a giant component with a stochastically bounded remainder.  %Recall the definition of $\mR_n$, $\cR$ and $\mu$ from Equations~\eqref{eq:r}-\eqref{eq:mu}.

\begin{theorem}
	\label{te:main}
	Suppose that the power series $\cG^\nu(z)$ belongs to the class $\mathscr{S}_d$ with radius of convergence $\rho$, and that $\cF^\omega(z)$ has radius of convergence strictly larger than $\cG^\nu(\rho)$. Let 
	$\mR$ be a random element of the set $\mathscr{U}(\cF' \circ \cG)$ that follows a Boltzmann distribution
	\[
	\Pr{\mR = R} = \mu(R) \frac{\rho^{|R|}}{|R|!} ((\cF')^\omega \circ  \cG^\nu)(\rho))^{-1}, \qquad R \in \mathscr{U}(\cF' \circ \cG).
	\]
	Then
	 \begin{align}
	 \label{eq:convergence}
	 d_{\textsc{TV}}(\mR_n, \mR) \to 0,  \qquad n\to \infty, \qquad   n \equiv 0 \mod d.
	 \end{align}
%	 as $n \in d \ndN$ tends to infinity.
\end{theorem}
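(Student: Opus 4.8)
The plan is to compute the total variation distance explicitly as a sum over unlabelled $\cF' \circ \cG$-objects $R$, comparing the exact probability $\Pr{\mR_n = R}$ with the Boltzmann probability $\Pr{\mR = R}$, and to show that the ratio of these two quantities tends to $1$ uniformly enough that the sum of the differences goes to zero. The first step is bookkeeping: using Lemma~\ref{le:composition} (the substitution rule) together with Proposition~1 on relabelling-invariance of Boltzmann measures, I would express the law of $\mS_n$ conditioned to have size $n$ in terms of a Boltzmann $\cF$-object with independent Boltzmann $\cG$-components, all conditioned on the total size being $n$. Extracting the largest block $Q_0$ and forming $\mR_n$ then amounts to conditioning on the event that the size of the largest $\cG$-component equals $n - |R|$ while the rest forms the structure $R$; in generating-function terms the normalising constant is $n! \, [z^n](\cF^\omega \circ \cG^\nu)(z)$, and the numerator, after summing over which component is the giant one, is governed by $[z^{m}]\cG^\nu(z)$ for $m = n - |R|$ times a factor recording the contribution of $R$ and of the derivative marking where the giant block attaches.

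The key analytic input is the subexponentiality hypothesis. Since $\cG^\nu \in \mathscr{S}_d$ with radius $\rho$ and $\cF^\omega$ is analytic at $\cG^\nu(\rho)$, Theorem~\ref{te:haupt} gives
\[
[z^n](\cF^\omega \circ \cG^\nu)(z) \sim (\cF^\omega)'(\cG^\nu(\rho)) \, [z^n]\cG^\nu(z), \qquad n \equiv 0 \bmod d,
\]
and likewise $[z^n]((\cF')^\omega \circ \cG^\nu)(z) \sim ((\cF')^\omega)'(\cG^\nu(\rho)) [z^n]\cG^\nu(z)$. Combined with the ratio property $g_{n}/g_{n+d} \to \rho^d$ for $g = \cG^\nu$, one gets that for each fixed $R$ of size $k$ (a multiple of $d$, since otherwise both probabilities vanish), the contribution of the "giant block has size $n-k$" term behaves like $\rho^{k}$ times a constant, matching precisely the Boltzmann weight $\mu(R)\rho^{k}/k!$ up to the normalisation $((\cF')^\omega \circ \cG^\nu)(\rho)$. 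So $\Pr{\mR_n = R} / \Pr{\mR = R} \to 1$ for each fixed $R$.

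To upgrade this pointwise convergence to convergence in total variation, I would use Scheffé's lemma: the laws of $\mR_n$ and $\mR$ are probability measures on the countable set $\mathscr{U}(\cF' \circ \cG)$, so pointwise convergence of the mass functions plus the fact that the limit is a genuine probability measure (which holds because $((\cF')^\omega \circ \cG^\nu)(\rho) < \infty$ by the analyticity of $\cF^\omega$ at $\cG^\nu(\rho)$) forces $d_{\textsc{TV}}(\mR_n, \mR) \to 0$. The one subtlety to handle carefully, and what I expect to be the main obstacle, is controlling the "error term'' coming from configurations where the largest block is \emph{not} asymptotically giant — i.e. proving that the probability that $\mR_n$ has size comparable to $n$ is negligible, equivalently that a single block of size $n - o(n)$ really does carry all but a vanishing fraction of the mass. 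This is exactly where the second condition in \eqref{eq:condition}, $g_n^{-1}\sum_{i+j=n} g_i g_j \to 2g(\rho)$, enters: it says that in a sum of two independent $\cG$-sized pieces conditioned to total $n$, one piece is typically $O(1)$, and iterating this (via Theorem~\ref{te:haupt} applied to $\cF^\omega \circ \cG^\nu$, which inherits membership in $\mathscr{S}_d$) shows the same for the whole Gibbs partition. Making the tail estimate uniform enough to conclude the Scheffé argument — rather than merely pointwise — is the technical heart of the proof, but it follows the standard single-big-jump philosophy for subexponential distributions.
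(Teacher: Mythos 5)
Your proposal is correct, and it takes a genuinely different route from the paper's own proof. The paper argues by building an explicit coupling: it uses Kolchin's representation (Lemma~\ref{le:composition}) to write $\mS_n$ as a Boltzmann vector $(\mF, \mG_1, \ldots, \mG_{|\mF|})$ conditioned to have total size $n$, introduces an auxiliary structure $\hat{\mS}_n$ obtained by gluing a random $\cG$-object of size $n - |\mR|$ onto the $*$-vertex of a sample of $\mR$, and then shows $d_{\textsc{TV}}(\mS_n, \hat{\mS}_n) \to 0$ via a carefully chosen sequence $t_n \to \infty$ and uniform estimates over the finite collection $M_n$ of ``small profiles.'' Your approach instead computes the probability mass function of the isomorphism type $\tilde{\mR}_n$ exactly: for a fixed type $\tilde{R}$ of size $k \equiv 0 \bmod d$ and $n > 2k$, the giant block must have size $n-k$ and be unique, and summing weights gives
\[
\Pr{\tilde{\mR}_n = \tilde{R}} = \frac{w(\tilde{R})}{k!} \cdot \frac{[z^{n-k}]\cG^\nu(z)}{[z^n](\cF^\omega \circ \cG^\nu)(z)},
\]
where $w(\tilde{R})$ is the total $\mu$-weight of labelled representatives of $\tilde{R}$. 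Telescoping the ratio property yields $[z^{n-k}]\cG^\nu(z) \sim \rho^k [z^n]\cG^\nu(z)$, and Theorem~\ref{te:haupt} gives $[z^n](\cF^\omega \circ \cG^\nu)(z) \sim (\cF')^\omega(\cG^\nu(\rho))\,[z^n]\cG^\nu(z)$, so the right-hand side converges to $w(\tilde{R})\rho^k / (k!\,((\cF')^\omega \circ \cG^\nu)(\rho)) = \Pr{\tilde{\mR} = \tilde{R}}$. Both approaches rely on the same analytic engine (the two conditions of $\mathscr{S}_d$, used via Theorem~\ref{te:haupt}); yours replaces the coupling and profile-uniformity arguments by a direct generating-function calculation.

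One point where you sell your own argument short: Scheffé's lemma for probability mass functions needs only pointwise convergence, not uniformity in $R$ and no separate tail estimate. Once you know $\Pr{\tilde{\mR}_n = \tilde{R}} \to \Pr{\tilde{\mR} = \tilde{R}}$ for every fixed type $\tilde{R}$ and that the limit sums to $1$ (which you correctly observe follows from $((\cF')^\omega \circ \cG^\nu)(\rho) < \infty$), dominated convergence applied to $(\Pr{\tilde{\mR} = \tilde{R}} - \Pr{\tilde{\mR}_n = \tilde{R}})^+$ immediately yields $d_{\textsc{TV}} \to 0$. The ``single-big-jump'' tightness worry you raise at the end — that the remainder might have size comparable to $n$ with non-negligible probability — is therefore handled automatically by the Scheffé step and does not require a separate argument; that concern is the one misconception in an otherwise sound plan.
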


This implies convergence in total variation of the number of components, which has also been studied in \cite{MR1179830,MR1763972} for the case  $\cF^\omega = \Set$. We may also verify convergence of moments.

\begin{proposition}
	\label{pro:fftt}
	Suppose that the assumptions of Theorem~\ref{te:main} hold. Let $c(\cdot)$ denote the number of components in a composite structure. Then $c(\mS_n)$ converges towards $1 + c(\mR)$ in total variation and arbitrarily high moments.
\end{proposition}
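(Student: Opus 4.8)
The plan is to treat the two conclusions separately. The total variation convergence is essentially immediate: by construction $\mR_n$ is obtained from $\mS_n$ by deleting a single (maximal) component, so $c(\mS_n) = 1 + c(\mR_n)$ for every $n$ with $[z^n](\cF^\omega \circ \cG^\nu)(z) > 0$. Since applying the deterministic map $c(\cdot)$ cannot increase total variation distance and translating by the constant $1$ does not change it, Theorem~\ref{te:main} gives
\[
d_{\textsc{TV}}(c(\mS_n), 1 + c(\mR)) = d_{\textsc{TV}}(c(\mR_n), c(\mR)) \le d_{\textsc{TV}}(\mR_n, \mR) \to 0
\]
as $n \to \infty$ with $n \equiv 0 \mod d$. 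In particular $c(\mS_n) \to 1 + c(\mR)$ in distribution, and to deduce convergence of all moments it then suffices to check that $\sup_n \Ex{c(\mS_n)^r} < \infty$ for every fixed $r \ge 1$: together with the convergence in distribution this forces $\{c(\mS_n)^k\}_n$ to be uniformly integrable for each $k$, whence $\Ex{c(\mS_n)^k} \to \Ex{(1+c(\mR))^k}$, the limit being finite by Fatou's lemma. If $\cF^\omega$ is a polynomial then $c(\mS_n) \le \deg \cF^\omega$ almost surely and there is nothing left to prove, so I would assume it is not.

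For the moment bound I would work with the falling factorial moments $\Ex{(c(\mS_n))_r}$, where $(x)_r := x(x-1)\cdots(x-r+1)$, because $(c(S))_r$ counts the ordered $r$-tuples of pairwise distinct components of a composite structure $S$, which is exactly the kind of decoration that behaves well under species composition. Unwinding what such a decorated object is --- an ordered $r$-tuple of distinguished components carrying $\cG$-structures, plus an $\cF$-structure on the whole set of components in which these $r$ are linearly ordered and the rest again carry $\cG$-structures --- and passing to exponential generating series, one should obtain the identity
\[
\sum_{n \ge 0} \Bigl( \sum_{S \in (\cF \circ \cG)[n]} \mu(S)\,(c(S))_r \Bigr) \frac{z^n}{n!} = f_r(\cG^\nu(z)), \qquad f_r(w) := w^r (\cF^\omega)^{(r)}(w),
\]
and hence, since $\mS_n$ is sampled with probability proportional to its $\mu$-weight,
\[
\Ex{(c(\mS_n))_r} = \frac{[z^n] f_r(\cG^\nu(z))}{[z^n] \cF^\omega(\cG^\nu(z))}.
\]

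Finally I would estimate numerator and denominator with Theorem~\ref{te:haupt}. Both $\cF^\omega$ and $f_r = w^r (\cF^\omega)^{(r)}(w)$ are power series with non-negative coefficients whose radius of convergence equals that of $\cF^\omega$, which by assumption exceeds $\cG^\nu(\rho)$; both are non-constant ($\cF^\omega$ because it has a positive coefficient in some degree $\ge 1$, and $f_r$ because $\cF^\omega$ is not a polynomial, so $(\cF^\omega)^{(r)} \not\equiv 0$ and the lowest-degree term of $f_r$ has degree at least $r \ge 1$). Theorem~\ref{te:haupt} therefore yields
\[
[z^n] f_r(\cG^\nu(z)) \sim f_r'(\cG^\nu(\rho))\,[z^n]\cG^\nu(z), \qquad [z^n]\cF^\omega(\cG^\nu(z)) \sim (\cF^\omega)'(\cG^\nu(\rho))\,[z^n]\cG^\nu(z)
\]
as $n \to \infty$, $n \equiv 0 \mod d$, where $(\cF^\omega)'(\cG^\nu(\rho)) > 0$ since $\cG^\nu(\rho) \in (0,\infty)$ and $(\cF^\omega)'$ has a positive coefficient. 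Dividing, $\Ex{(c(\mS_n))_r}$ converges to the finite constant $f_r'(\cG^\nu(\rho))/(\cF^\omega)'(\cG^\nu(\rho))$; in particular all falling factorial moments of $c(\mS_n)$, and hence all ordinary moments (fixed polynomial combinations of them), stay bounded. I expect the only genuinely delicate point to be the combinatorial identity for the factorial moments in the second step; everything else is either a direct consequence of Theorem~\ref{te:main} or a routine use of Theorem~\ref{te:haupt} combined with the standard link between convergence in distribution, uniform integrability, and convergence of moments.
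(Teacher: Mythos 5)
Your proof is correct and rests on the same essential mechanism as the paper: the total variation convergence is read off directly from Theorem~\ref{te:main} together with the deterministic relation $c(\mS_n)=1+c(\mR_n)$, and the moment control comes from writing the moment as a coefficient ratio $[z^n]f_{\star}(\cG^\nu(z))/[z^n]\cF^\omega(\cG^\nu(z))$ and invoking Theorem~\ref{te:haupt}. Two minor variations are worth noting. First, the paper works directly with ordinary powers, setting $f(z)=\sum_{i\ge 1} i^k f_i z^i$, while you use falling factorial moments with $f_r(w)=w^r(\cF^\omega)^{(r)}(w)$; either series is non-constant and analytic at $\cG^\nu(\rho)$ (under your harmless side case for polynomial $\cF^\omega$), so this is a matter of taste. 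Second, and more substantively, the paper dispenses with the uniform-integrability detour: it observes that the asymptotic ratio $f'(\cG^\nu(\rho))/(\cF')^\omega(\cG^\nu(\rho))$ delivered by Theorem~\ref{te:haupt} \emph{is} $\Ex{(c(\mR)+1)^k}$, which is a one-line computation against the Boltzmann law of $\mF'$ (using $[z^j](\cF')^\omega(z)=(j+1)f_{j+1}$), so the moment convergence is identified directly rather than deduced from convergence in distribution plus tightness of higher moments. Your indirect route is entirely valid and slightly more robust, but the direct identification is shorter; the key inputs are the same in both arguments.
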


Roughly speaking, the following lemma shows that Theorem~\ref{te:main} applies, whenever the species  $\cG^\nu$ is related to structures admitting a tree-like decomposition. This encompasses important families of enumerative series in combinatorics, for which  it is not known whether all members fall into the setting of Proposition~\ref{pro:easy}.   We demonstrate its usefulness in Section~\ref{sec:graphs} with a novel application to small block-stable graph classes. The proof of Lemma~\ref{le:simply} uses the strong ratio property and a variety of results related to simply generated trees. 

\begin{lemma}
	\label{le:simply}
		Let $\cZ(z) = \sum_{n \ge 1} Z_n z^n$ and $\phi(z) = \sum_{k \ge 0} \omega_k z^k$ be power series with non-negative coefficients that are related by the equation
		\[
		\cZ(z) = z \phi(\cZ(z)).
		\]
		Suppose that $\omega_0>0$, $\omega_k>0$ for at least one $k \ge 2$, and let $d$ denote the greatest common divisor of all $k$ with $\omega_k>0$. By \cite[Lem. 13.3]{MR2908619}, we know that $Z_n=0$ if $n-1$ is not a multiple of $d$, and $Z_n>0$ if $n \equiv 1 \mod d$ is large enough.
%		\begin{align*}
%		Z_n =0 \qquad &\mbox{if } n \equiv 1 \mod d, \\
%		Z_n >0 \qquad  &\mbox{if } n \equiv 1 \mod d \quad \mbox{ is large enough.}
%		\end{align*}
		Suppose that $\cZ(z)$ has non-zero radius of convergence $\rho_\cZ$. Then
		\begin{align*}
			Z_{n}^{-1} \sum_{i+j=n+1} Z_i Z_j \sim 2 \cZ(\rho_\cZ) / \rho_\cZ, \qquad n \to \infty, \qquad n \equiv 1 \mod d.
		\end{align*}
		This implies that the shifted series $\cZ(z)/z$ belongs to $\mathscr{S}_d$, since it was shown in \cite[Rem. 7.5]{MR2908619} that $\cZ(\rho_\cZ)<\infty$, and in \cite[Thm. 18.6, 18.10]{MR2908619}, that 
		\[
			Z_n/Z_{n+d} \to \rho_\cZ^d \qquad \text{and} \qquad Z_n^{1/n} \to 1/\rho_\cZ
		\]
		as $n \equiv 1 \mod d$ becomes large.
\end{lemma}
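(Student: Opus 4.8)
The plan is to derive the claimed asymptotics for the self-convolution of $(Z_n)$ from the strong ratio property of simply generated trees, and then to read off membership in $\mathscr{S}_d$ by assembling the three conditions in the definition from results already quoted in the statement. The key observation is that $\cZ(z)$ is the generating series of a simply generated family of plane trees with weight sequence $(\omega_k)_{k\ge 0}$: a tree $T$ gets weight $\prod_{v} \omega_{d^+(v)}$, where $d^+(v)$ is the out-degree, and $Z_n$ counts the total weight of such trees with $n$ vertices. The equation $\cZ(z) = z\phi(\cZ(z))$ is precisely the standard decomposition at the root. Write $\tau = \cZ(\rho_\cZ) \in (0,\infty)$, which is finite by \cite[Rem. 7.5]{MR2908619}.

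First I would set up the probabilistic interpretation: the size $|T_n|$ of a tree picked with probability proportional to its weight among those of size $n$, or rather the associated conditioned Galton--Watson/simply generated tree, and recall that the key analytic input is the local limit behaviour of $Z_n$. The quantity $\sum_{i+j=n+1} Z_i Z_j$ counts (with weights) ordered pairs of trees of total size $n+1$; equivalently, gluing the two roots under a common new root and accounting for the factor $\omega_2$, this is related to $[z^{n+1}]\cZ(z)^2$, and more to the point $[z^n]\bigl(\cZ(z)/z\bigr)\cdot\bigl(\cZ(z)/z\bigr)\cdot z$ up to the index shift. So the real content is: $[z^{n}]\bigl(\cZ(z)^2/z\bigr) \big/ Z_n \to 2\tau/\rho_\cZ$ along $n\equiv 1 \bmod d$. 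The mechanism behind such a statement is that in a large simply generated tree the ``spine'' picture forces one of the two subtrees to carry almost all the mass while the other stays small, and summing the small one's contribution produces the factor $\cZ(\rho_\cZ) = \tau$ (once from each of the two orderings, giving the $2$), while the division by $\rho_\cZ$ comes from the shift $z^{n+1}\leftrightarrow z^n$. Concretely I would invoke the strong ratio property $Z_n/Z_{n+d} \to \rho_\cZ^d$ from \cite[Thm. 18.6, 18.10]{MR2908619}, which combined with $Z_n^{1/n}\to 1/\rho_\cZ$ and the finiteness $\sum_n Z_n \rho_\cZ^n = \tau < \infty$ lets one apply a dominated-convergence / subexponential-density argument to the convolution sum: split $\sum_{i+j=n+1}Z_iZ_j$ by which index is $\le K$, use the ratio property to compare $Z_{n+1-i}$ with $Z_n$ termwise, let $n\to\infty$ then $K\to\infty$, and control the central block via the finiteness of $\tau$ exactly as in the proof of \cite[Thm.~4.8]{MR3097424}.

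With the convolution asymptotic in hand, the conclusion about $\cZ(z)/z$ is bookkeeping. Write $g(z) = \cZ(z)/z = \sum_{n\ge 0} g_n z^n$ with $g_n = Z_{n+1}$, so $g$ has radius of convergence $\rho_\cZ$, and $g_n = 0$ unless $n$ is a multiple of $d$ (this is the statement $Z_n=0$ for $n-1\notin d\ndN$, quoted from \cite[Lem. 13.3]{MR2908619}), with $g_n>0$ for large $n\equiv 0\bmod d$. The first condition of $\mathscr{S}_d$, namely $g_n/g_{n+d}\to\rho_\cZ^d$, is immediate from $Z_n/Z_{n+d}\to\rho_\cZ^d$. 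For the second, $\sum_{i+j=n} g_i g_j = \sum_{i+j=n} Z_{i+1}Z_{j+1} = \sum_{i'+j'=n+2} Z_{i'}Z_{j'}$, and $g(\rho_\cZ) = \cZ(\rho_\cZ)/\rho_\cZ = \tau/\rho_\cZ$; so I need $g_n^{-1}\sum_{i+j=n} g_i g_j \to 2\tau/\rho_\cZ$, i.e. $Z_{n+1}^{-1}\sum_{i'+j'=n+2}Z_{i'}Z_{j'}\to 2\tau/\rho_\cZ$, which is the displayed asymptotic with $n$ replaced by $n+1$ — and $n+1\equiv 1\bmod d$ exactly when $n\equiv 0\bmod d$, matching. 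Hence $g\in\mathscr{S}_d$.

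The main obstacle is the first, genuinely analytic step: proving $Z_n^{-1}\sum_{i+j=n+1}Z_iZ_j \to 2\tau/\rho_\cZ$ in the full generality allowed here — in particular covering the case where $\rho_\cZ$ is a ``generic'' (non-branch-point) singularity, where $\cZ$ need not be algebraic, and where $Z_n$ has no clean polynomial-times-slowly-varying form (so Proposition~\ref{pro:easy} does not apply). The standard analytic-combinatorics route is unavailable; instead one must run the subexponential-density / strong-ratio argument of \cite{MR2908619} and \cite{MR3097424} directly on the convolution, which requires care in the truncation estimates to ensure the ``central'' contribution $\sum_{K < i < n-K}Z_i Z_{n+1-i}$ is $o(Z_n)$ uniformly — this is where the finiteness $\tau<\infty$ and the uniform comparability of $Z_{n+1-i}/Z_n$ for $i$ in a window are both essential, and is the one place a fully rigorous write-up needs genuine work rather than citation.
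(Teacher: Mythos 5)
There is a genuine gap, and you have correctly located it yourself in your final paragraph — but the gap is fatal to your route, not merely a matter of ``genuine work.'' What you propose is to verify the subexponential convolution bound for $(Z_n)$ directly by splitting $\sum_{i+j=n+1}Z_iZ_j$ into wings (indices $\le K$) and a central block, using the ratio property $Z_n/Z_{n+d}\to\rho_\cZ^d$ to handle the wings, and then to ``control the central block via the finiteness of $\tau$.'' But the ratio property together with $\sum Z_n\rho_\cZ^n<\infty$ is exactly long-tailedness, and long-tailedness does not imply subexponentiality; there exist long-tailed lattice densities whose self-convolution is not asymptotic to $2$ times the density. Showing the central block is $o(Z_n)$ is the entire content of the lemma, so without an extra idea the sketch is circular. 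Note also that the ``strong ratio property'' you invoke from Janson (Thm.~18.6, 18.10) is a statement about the ratio $Z_{n+d}/Z_n$ of the partition function, not the strong ratio theorem for random walks that the paper actually uses, and the former does not feed into a dominated-convergence argument for the middle of the sum.

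The missing idea is the cycle lemma applied twice. Setting $\Pr{|\cT|=n}=Z_n\rho_\cZ^n/\tau$ as you do, the paper rewrites both sides of the target asymptotic as random-walk probabilities for $S_n=\xi_1+\cdots+\xi_n$ (i.i.d.\ offspring): the cycle lemma (or rotation/Dvoretzky--Motzkin lemma) with $r=1$ gives $\Pr{|\cT|=n}=\tfrac1n\Pr{S_n=n-1}$, and with $r=2$, applied to the concatenated \L ukasiewicz path of an ordered pair of trees of total size $n+1$, it gives $\Pr{|\cT|+|\cT'|=n+1}=\tfrac{2}{n+1}\Pr{S_{n+1}=n-1}$. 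The $2$ you were hoping to produce via a spine heuristic falls out exactly here. The desired asymptotic then reduces to $\Pr{S_{n+1}=n-1}\sim\Pr{S_n=n-1}$, which is the strong ratio theorem for lattice random walks~\cite{MR0107292}; this holds under only first-moment/aperiodicity hypotheses and requires no subexponentiality of $Z_n$ at all. Your bookkeeping in the second paragraph (reindexing $g_n=Z_{n+1}$, checking $g_n/g_{n+d}\to\rho_\cZ^d$, and matching the lattice $n\equiv 0\bmod d$ with $n+1\equiv1\bmod d$) is correct and matches the paper; it is only the key convolution asymptotic that needs the cycle-lemma reduction rather than a direct subexponentiality argument.
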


If the series $\cG^\nu(z)$ is periodic with a shift, then different behaviour may occur depending along which lattice we let $n$ tend to infinity.

\begin{theorem}
	\label{te:extension}
	Suppose that there is an integer $0 \le m < d$ such that $\cG^\nu(z) / z^m$ belongs to the class $\mathscr{S}_d$. Let $D = d/\gcd(m,d)$ and for each $0 \le a < D$, let  $\cF_a^\omega$ denote the restriction of $\cF^\omega$ to objects whose  size lies in $a + D\ndZ$. If the exponential generating series $\cF_a^\omega(z)$ is not constant, then
	\[
		d_{\textsc{TV}}(\mR_n, \mR(a)) \to 0, \qquad n \to \infty, \qquad n \equiv am \mod d
	\]
	with the limit object $\mR(a)$ following a $\mathbb{P}_{(\cF'_a)^\omega \circ \cG^\nu, \rho}$ Boltzmann distribution.
\end{theorem}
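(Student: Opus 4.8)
The plan is to run the proof of Theorem~\ref{te:main} with $\cF^\omega$ replaced by its restriction $\cF_a^\omega$; the one new ingredient is a version of the ``one big jump'' asymptotics for $(\cF_a^\omega\circ\cG^\nu)(z)$ that survives the monomial factor $z^m$ appearing in $\cG^\nu$. \emph{Step 1 (reduction to $\cF_a$).} Since $\cG^\nu(z)/z^m\in\mathscr S_d$, every $\cG$-structure of positive $\nu$-weight has size $\equiv m$ modulo $d$; hence under the Gibbs measure every composite structure $\mS_n$ of size $n$ has all its blocks of size $\equiv m\pmod{d}$, so if it has $k$ blocks then $n\equiv km\pmod{d}$. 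For $n\equiv am\pmod{d}$ this forces $(k-a)m\equiv 0\pmod{d}$, i.e.\ $D\mid k-a$ with $D=d/\gcd(m,d)$, so $\mF_n$ is an $\cF_a$-structure. Thus, along such $n$, the weighted sets $(\cF\circ\cG)[n]$ and $(\cF_a\circ\cG)[n]$ coincide, the Gibbs partition is unchanged, and $\mR_n$ takes values in the countable set $\mathscr U(\cF'_a\circ\cG)$, which also supports $\mR(a)$.

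\emph{Step 2 (asymptotics of the composition).} Write $\cG^\nu(z)=z^m h(z)$ with $h\in\mathscr S_d$ of radius $\rho$. Since $\cF_a$-structures have a number of blocks $\equiv a\pmod{D}$, we may write $\cF_a^\omega(w)=w^a\Psi(w^D)$ with $\Psi$ a power series with non-negative coefficients; substituting and using that $Dm=d\cdot(m/\gcd(m,d))$ is a multiple of $d$ gives the identity
\[
\cF_a^\omega(\cG^\nu(z))=z^{am}\,h(z)^a\,\Psi\!\bigl(z^{Dm}h(z)^D\bigr).
\]
The right-hand factor $h(z)^a\Psi(z^{Dm}h(z)^D)$ lies in $\mathscr S_d$: indeed $h(z)^a$ and $h(z)^D$ do, by Theorem~\ref{te:haupt}; multiplication by $z^{Dm}$ preserves $\mathscr S_d$ (a routine check, since $Dm$ is a multiple of $d$); then $\Psi(z^{Dm}h(z)^D)\in\mathscr S_d$ by Theorem~\ref{te:haupt} again, using that $\Psi$ is non-constant (as $\cF_a^\omega$ is) and analytic at $\cG^\nu(\rho)^D$ (because $\cF^\omega$ has radius of convergence $>\cG^\nu(\rho)$, hence so do $\cF_a^\omega$ and $\Psi$ at the corresponding points); and the product of these two $\mathscr S_d$-factors is handled by the $\mathscr S_d$-analogue of Lemma~\ref{le:help}, obtained by writing each factor as $G(z^d)$ with $G\in\mathscr S_1$ and applying Lemma~\ref{le:help} to the $G$'s. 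Tracking the multiplicative constants through these applications of Theorem~\ref{te:haupt} and Lemma~\ref{le:help}, and simplifying with $\cG^\nu(\rho)=\rho^m h(\rho)$, one obtains
\[
[z^n](\cF_a^\omega\circ\cG^\nu)(z)\ \sim\ \rho^{(1-a)m}\,(\cF_a^\omega)'(\cG^\nu(\rho))\,\bigl[z^{\,n-(a-1)m}\bigr]\cG^\nu(z),\qquad n\equiv am\pmod{d}.
\]
(When $\Psi$ is constant, which forces $a\ge1$, the same conclusion follows by applying Theorem~\ref{te:haupt} directly to $h(z)^a$.)

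\emph{Step 3 (pointwise convergence and total variation).} Fix $R\in\mathscr U(\cF'_a\circ\cG)$ and set $r=|R|$; one checks $r\equiv(a-1)m\pmod{d}$, so $n-r\equiv m\pmod{d}$ and $[z^{n-r}]\cG^\nu(z)>0$ for large $n$. For $n>2r$ with $n\equiv am\pmod{d}$, any composite structure reducing to $R$ after deletion of the largest block necessarily has a unique largest block, of size exactly $n-r$, so the tie-breaking rule plays no role; re-attaching such a block to $R$ sets up a bijection yielding the exact identity
\[
\Pr{\mR_n=R}=\frac{\mu(R)}{r!}\cdot\frac{[z^{n-r}]\cG^\nu(z)}{[z^n](\cF_a^\omega\circ\cG^\nu)(z)}.
\]
The ratio property of $h$ gives $[z^{n-r}]\cG^\nu(z)\big/[z^{\,n-(a-1)m}]\cG^\nu(z)\to\rho^{\,r-(a-1)m}$, and combining with Step~2 and cancelling powers of $\rho$ (the exponent $r-(a-1)m-(1-a)m$ equals $r$) we get
\[
\Pr{\mR_n=R}\ \longrightarrow\ \mu(R)\,\frac{\rho^{|R|}}{|R|!}\cdot\frac{1}{((\cF'_a)^\omega\circ\cG^\nu)(\rho)}\ =\ \Pr{\mR(a)=R},
\]
using $((\cF'_a)^\omega\circ\cG^\nu)(\rho)=(\cF_a^\omega)'(\cG^\nu(\rho))$. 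Pointwise convergence of the atom probabilities on the countable set $\mathscr U(\cF'_a\circ\cG)$, together with $\mR(a)$ being a genuine probability distribution on it, upgrades to convergence in total variation by Scheffé's lemma.

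The crux is Step~2: because the factor $z^m$ destroys membership of $\cG^\nu$ in $\mathscr S_d$, Theorem~\ref{te:main} cannot be invoked directly and the asymptotics of $[z^n](\cF_a^\omega\circ\cG^\nu)(z)$ must be read off from Theorem~\ref{te:haupt} and Lemma~\ref{le:help} through the factored form $z^{am}h(z)^a\Psi(z^{Dm}h(z)^D)$; the bookkeeping needed to verify that the emergent constant is exactly $\rho^{(1-a)m}(\cF_a^\omega)'(\cG^\nu(\rho))$ — so that the limit is precisely the stated Boltzmann normalisation — is where care is required, whereas Steps~1 and 3 are routine adaptations of the proof of Theorem~\ref{te:main}.
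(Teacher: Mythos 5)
Your proof is correct, and the overall architecture (reduce to the restricted species $\cF_a^\omega$, then pin down the asymptotics of $[z^n](\cF_a^\omega\circ\cG^\nu)(z)$ against a shifted coefficient of $\cG^\nu$) matches the paper's, but you take a genuinely different route in both the way the key asymptotic is established and the way total variation convergence is concluded. For the asymptotic, the paper verifies its Equation~\eqref{eq:refme} probabilistically by passing to the rescaled variables $\bar g=(g-m)/d$, $\bar f_a=(f_a-a)/D$, $\bar n=(n-am)/d$ and applying Theorem~\ref{te:haupt} and Lemma~\ref{le:help} in $\mathscr S_1$ to a decomposition $S_a^{(0)}+\sum_{j=1}^{\bar f_a}(S_D^{(j)}+Dm/d)$; you instead factor $\cF_a^\omega(\cG^\nu(z))=z^{am}h(z)^a\Psi(z^{Dm}h(z)^D)$ and work directly in $\mathscr S_d$, which is a pleasant alternative (the small steps you flag as ``routine'' --- that multiplying by $z^{Dm}$ preserves $\mathscr S_d$ when $d\mid Dm$, and the $\mathscr S_d$-version of Lemma~\ref{le:help} via $G(z^d)$ --- are indeed routine, and your constant $\rho^{(1-a)m}(\cF_a^\omega)'(\cG^\nu(\rho))$ agrees with what \eqref{eq:refme} encodes). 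More significantly, where the paper then says the total variation convergence follows ``in an entirely analogous manner'' to Theorem~\ref{te:main}, i.e.\ via the conditioning argument with the sets $M_n$ and the uniform estimates of Equation~\eqref{eq:gibtoshow}, you instead write down the exact atom probability $\Pr{\mR_n=R}=\mu(R)[z^{n-r}]\cG^\nu(z)\big/\bigl(r!\,[z^n](\cF_a^\omega\circ\cG^\nu)(z)\bigr)$ for $n>2r$ (valid because the largest block is then unique and the decomposition is a weight-preserving bijection), show pointwise convergence to the Boltzmann pmf, and invoke Scheff\'e. This is shorter and more self-contained than the paper's route for the theorem at hand, at the price of not reusing the machinery already built for Theorem~\ref{te:main}; both are sound.
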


\section{Applications to random graphs}
\label{sec:graphs}

In the following, we let $\cA$ denote a small block-stable class of graphs,  $\cC \subset \cA$ its subclass of connected graphs, and $\cB \subset \cA$ the subclass of all graphs that are $2$-connected or consist of two vertices joined by a single edge. To exclude the case where $\cA$ is the trivial class of all graphs consisting of isolated points, we assume that $\cB$ is non-empty. Hence we may let $d$ denote the greatest common divisor of all integers $k$ with $[z^k] \exp(\cB'(z))>0$.  We let $\rho>0$ denote the radius of convergence of the exponential generating series $\cA(z) = \exp(\cC(z))$. Clearly $\rho$ is also the radius of convergence $\cC(z)$.

Using Lemma~\ref{le:simply} and the robustness of subexponential sequences against perturbation, we deduce the following enumerative result.

\begin{theorem}
	\label{te:app1}
	The series $\cC'(z)$ and $\cC(z)/z$ both lie in the class $\mathscr{S}_d$ of subexponential sequences with span $d$. That is $\cC'(\rho), \cC(\rho)< \infty$, and the coefficients $c_n = [z^n]\cC(z)$ satisfy
	\[
		\frac{c_n}{c_{n+d}} \sim \rho^d, \qquad \frac{1}{c_n} \sum_{i+j = n+1} c_i c_j \sim 2 \cC(\rho)/\rho
	\]
	as $n \equiv 1 \mod d$ becomes large.
\end{theorem}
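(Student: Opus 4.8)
The plan is to recognize the series $z\cC'(z)$ as the generating series of a simply generated tree family, apply Lemma~\ref{le:simply} to it, and then transfer the resulting information from $\cC'$ to $\cC$.

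First I would set $\cZ(z) = z\cC'(z)$ and $\phi(z) = \exp(\cB'(z))$, so that the identity \eqref{eq:simple} becomes $\cZ(z) = z\phi(\cZ(z))$. I then check the hypotheses of Lemma~\ref{le:simply}. Since every block in $\cB$ has at least two vertices, $[z^1]\cB(z) = 0$, hence $\cB'(0) = 0$ and $\omega_0 = \phi(0) = 1 > 0$. Since $\cB$ is non-empty there is an integer $N \ge 2$ with $[z^{N-1}]\cB'(z) > 0$, and as $\cB'(z)$ has non-negative coefficients this forces $[z^{2(N-1)}]\exp(\cB'(z)) \ge \tfrac12([z^{N-1}]\cB'(z))^2 > 0$, so $\phi$ has a positive coefficient $\omega_k$ with $k = 2(N-1) \ge 2$; moreover the greatest common divisor of the support of $\phi$ is exactly the integer $d$ of the statement. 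Finally $\cZ(z) = z\cC'(z)$ has positive radius of convergence $\rho_\cZ = \rho$, because $\cC'(z)$ has the same radius of convergence as $\cC(z)$. Lemma~\ref{le:simply} now gives directly that $\cC'(z) = \cZ(z)/z$ belongs to $\mathscr{S}_d$, that $\rho\,\cC'(\rho) = \cZ(\rho_\cZ) < \infty$, and — writing $Z_n = [z^n]\cZ(z) = nc_n$ — that $Z_n/Z_{n+d} \to \rho^d$, whence $c_n/c_{n+d} = \tfrac{n+d}{n}\cdot Z_n/Z_{n+d} \to \rho^d$ as $n \equiv 1 \bmod d$ becomes large.

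The finiteness of $\cC(\rho)$ is immediate from $\cC(\rho) = \sum_{k\ge1}c_k\rho^k \le \sum_{k\ge1}kc_k\rho^k = \rho\,\cC'(\rho) < \infty$. It remains to prove the convolution asymptotic $c_n^{-1}\sum_{i+j=n+1}c_ic_j \sim 2\cC(\rho)/\rho$; together with the ratio already established, the lattice support, and $\cC(\rho) < \infty$, this is precisely the assertion $\cC(z)/z \in \mathscr{S}_d$ (after reindexing the convolution and using $c_0 = 0$). To prove it I would pass to span one: since $\cC'(z) \in \mathscr{S}_d$ one can write $\cC'(z) = f(z^d)$ with $f \in \mathscr{S}_1$ of radius $\rho^d$, and correspondingly $\cC(z)/z = \hat C(z^d)$ with $\hat c_a := [w^a]\hat C(w) = c_{da+1}$ and $f_a := [w^a]f(w) = (da+1)\hat c_a$. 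Then $[w^a]\hat C(w)/f_a = 1/(da+1) \to 0$ and $[w^a]\hat C'(w)/f_a = (a+1)\hat c_{a+1}/f_a \to 1/(d\rho^d)$, so Lemma~\ref{le:help}, applied with this $f$ and with $g_1 = \hat C$, $g_2 = \hat C'$, yields $[w^a](\hat C\hat C')(w)/f_a \to \tfrac{1}{d\rho^d}\hat C(\rho^d)$; here the quantities $\hat C(\rho^d) = \cC(\rho)/\rho$ and $\hat C'(\rho^d) \le \rho^{-d}\cC'(\rho)/d$ are both finite by the previous paragraph, so the conclusion of Lemma~\ref{le:help} is unambiguous. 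Since $\hat C\hat C' = \tfrac12(\hat C^2)'$ this reads $[w^{a+1}]\hat C^2 \sim \tfrac{2\hat C(\rho^d)}{d\rho^d}\cdot\tfrac{f_a}{a+1}$, and dividing by $\hat c_{a+1}$ while using $f_a = (da+1)c_{da+1}$ and $c_{da+1}/c_{da+d+1} \to \rho^d$ gives $[w^{a+1}]\hat C^2/\hat c_{a+1} \to 2\hat C(\rho^d)$. Undoing the substitution $w = z^d$ turns this into $c_n^{-1}\sum_{i+j=n+1}c_ic_j \to 2\cC(\rho)/\rho$ for $n \equiv 1 \bmod d$ and at the same time exhibits $\hat C \in \mathscr{S}_1$, i.e. $\cC(z)/z \in \mathscr{S}_d$.

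The only genuinely delicate point is this last step: one must carry out the span-$d$-to-span-$1$ bookkeeping carefully and, in particular, verify that $\hat C(\rho^d)$ and $\hat C'(\rho^d)$ are finite — both of which come down to the single estimate $\rho\,\cC'(\rho) < \infty$ from Lemma~\ref{le:simply} — so that Lemma~\ref{le:help} applies with no $0\cdot\infty$ ambiguity. All the remaining work is a mechanical transfer of the conclusions of Lemmas~\ref{le:simply} and~\ref{le:help}.
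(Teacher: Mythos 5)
Your proof is correct, and the first half follows the paper exactly: apply Lemma~\ref{le:simply} to $\cZ(z) = z\cC'(z)$ via the block-decomposition identity~\eqref{eq:simple}, conclude $\cC'(z) \in \mathscr{S}_d$ with $\cC'(\rho) < \infty$, transfer the ratio from $Z_n = nc_n$ to $c_n$, and deduce $\cC(\rho) < \infty$ from $\cC(\rho) \le \rho\,\cC'(\rho)$. Where you diverge is in the convolution estimate $c_n^{-1}\sum_{i+j=n+1}c_ic_j \to 2\cC(\rho)/\rho$. The paper proves this by a hands-on truncation: it invokes the characterization of subexponential sequences (Foss--Korshunov--Zachary, Thm.\ 4.21) to kill the central terms of the convolution $\sum x_{i-1}x_{j-1}$, chooses a slowly growing cutoff $k_n = o(n)$ along which $x_n/x_{n+y} \to \rho^y$ uniformly, and computes the surviving boundary sum $2\sum_{i<k_n} c_i\,(x_{n-i}/x_{n-1})$ directly. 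You instead pass to span $1$ (writing $\cC'(z) = f(z^d)$, $\cC(z)/z = \hat C(z^d)$), apply Lemma~\ref{le:help} to the pair $(\hat C, \hat C')$ against $f$, and exploit the identity $\hat C\hat C' = \tfrac12(\hat C^2)'$ to pull the convolution asymptotic for $\hat C$ out of the one for $\hat C\hat C'$; the only delicate point, the finiteness of $\hat C'(\rho^d)$ needed to resolve the potential $0\cdot\infty$ in Lemma~\ref{le:help}, you handle correctly via $\hat C'(\rho^d) \le \rho^{-d}\cC'(\rho)/d$. Your route has the advantage of resting entirely on lemmas already stated in the paper (it needs no extra citation beyond Lemma~\ref{le:help}), at the price of some algebraic bookkeeping in the span reduction; the paper's route is more direct once one accepts the external truncation characterization. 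Both are valid.
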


For each integer $i$ we let $\Set_i$ denote the restriction of the species $\Set$ to objects whose size lies in the lattice $i + d \ndZ$. For each $0 \le a < d$ we let $\mG_a$ denote a random graph from the class $\cA$ following a $\mathbb{P}_{\Set_a' \circ \cC, \rho}$ Boltzmann distribution. That is, $\mR_a$ is a random graph from the class $\cA$ whose number of components lie in the shifted lattice $a-1 + d \ndZ$, and its distribution is given by
\[
	\Pr{\mG_a = G} = \frac{\rho^{|G|}}{|G|!} \left (\sum_{\substack{k \ge 0 \\ k \in a-1 + \ndZ}} \frac{\cC(\rho)^k}{k!}\right )^{-1}, \qquad G \in \mathscr{U}(\Set_{a-1} \circ \cC).
\]
For each integer $n \in \ndN_0$ let $\mA_n$ denote the random graph sampled uniformly from the set $\cA[n]$ of graphs in $\cA$ with vertex set $[n]$. We let $\textup{frag}(\mA_n)$ denote the graph obtained by deleting a uniformly at random drawn largest component of $\mA_n$, and relabelling the rest in a canonical order-preserving way to a set of the form $[k]$ for some $k\ge 0$. Alternatively, we may relabel by choosing a bijection uniformly at random, it makes no difference for the resulting {\em distribution}. Theorem~\ref{te:extension} yields our main application.
\begin{theorem}
	\label{te:app2}
	For each $0 \le a < d$, it holds that
	\[
		d_{\textsc{TV}}(\textup{frag}(\mA_n), \mG_a) \to 0
	\]
	as $n \equiv a \mod d$ becomes large. The coefficients of $\cA(z)$ along the lattice $a + d\ndZ$ belong, after a shift by $-a$, to the class $\mathscr{S}_d$ of subexponential sequences with span $d$. As $n \equiv a \mod d$ becomes large, it holds that
	\[
		[z^n] \cA(z) \sim C_{a-1} [z^{n+1-a}] \cC(z) \qquad \text{with} \qquad C_{a-1} = \sum_{\substack{k \ge 0 \\ k \equiv a-1 \mod d}} \cC(\rho)^k/k!.
	\]
\end{theorem}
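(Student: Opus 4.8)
The plan is to recognise $\textup{frag}(\mA_n)$ as the remainder of a convergent Gibbs partition and to invoke Theorem~\ref{te:extension} with $\cF^\omega = \Set$ and $\cG^\nu = \cC$. First I would record the structural identification: a block-stable class is decomposable, so \eqref{eq:setiso} gives an isomorphism of species $\cA \simeq \Set \circ \cC$ (all weights being $1$), under which the uniform random graph $\mA_n$ on $[n]$ is exactly the composite structure $\mS_n$ of Section~\ref{sec:conv} for this choice of $\cF^\omega, \cG^\nu$, and the operation $\textup{frag}$ coincides with the construction of $\mR_n$ (the derived-species marking on the $\Set$-side carries no extra data, so $\Set' \circ \cC \simeq \Set \circ \cC \simeq \cA$). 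Then I would check the hypotheses of Theorem~\ref{te:extension}. By Theorem~\ref{te:app1} the shifted series $\cC(z)/z$ lies in $\mathscr{S}_d$, so the requirement that $\cG^\nu(z)/z^m$ belong to $\mathscr{S}_d$ is met with $m = 1$ when $d \ge 2$; when $d = 1$ one takes $m = 0$, the membership $\cC(z) = z \cdot (\cC(z)/z) \in \mathscr{S}_1$ being supplied by Theorem~\ref{te:app1} together with the robustness in Lemma~\ref{le:help} (or one simply applies Theorem~\ref{te:main} in that degenerate case). The subcriticality hypothesis is automatic, since $\Set(z) = \exp(z)$ is entire while $\cC(\rho) < \infty$. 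With $m = 1$ one gets $D = d/\gcd(1,d) = d$, and for each $0 \le a < d$ the restriction $\cF_a^\omega = \Set_a$ has the non-constant generating series $\sum_{k \equiv a \bmod d} z^k/k!$. Hence Theorem~\ref{te:extension} gives $d_{\textsc{TV}}(\mR_n, \mR(a)) \to 0$ along $n \equiv a \bmod d$ (since $m = 1$, the congruence $n \equiv am \bmod d$ of Theorem~\ref{te:extension} reads $n \equiv a \bmod d$).

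Next I would identify the limit $\mR(a)$ with $\mG_a$. Unwinding the derived-species construction, a $\Set_a'$-structure on a set $U$ is a $\Set_a$-structure on $U \cup \{*_U\}$, which exists precisely when $|U| + 1 \equiv a \bmod d$, that is, when $|U| \equiv a - 1 \bmod d$; hence $\Set_a' \simeq \Set_{a-1}$, with indices read modulo $d$. Therefore $\mR(a)$ follows a $\mathbb{P}_{\Set_{a-1} \circ \cC, \rho}$ distribution, whose normalising constant equals $(\Set_{a-1} \circ \cC)(\rho) = \Set_{a-1}(\cC(\rho)) = \sum_{k \equiv a-1 \bmod d} \cC(\rho)^k/k! = C_{a-1}$; this is exactly the law defining $\mG_a$, which settles the total variation assertion.

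For the enumerative statements I would extract the coefficient asymptotics that underlie the proof of Theorem~\ref{te:extension}. Write $\cC(z) = z\,\tilde g(z)$ with $\tilde g := \cC(z)/z \in \mathscr{S}_d$; recall that $\cC$ is supported on $1 + d\ndZ$ (by \eqref{eq:simple} and the facts quoted in Lemma~\ref{le:simply}), so $\cC(z)^k$ is supported on $k + d\ndZ$, whence for $n \equiv a \bmod d$
\[
	[z^n]\cA(z) = [z^n]\exp(\cC(z)) = \sum_{k \equiv a \bmod d} \frac{1}{k!}\,[z^{n-k}]\tilde g(z)^k .
\]
For each fixed $k \ge 1$, Theorem~\ref{te:haupt} applied to $g = \tilde g$ and the monomial $f(w) = w^k$ gives $[z^{n-k}]\tilde g(z)^k \sim k\,\tilde g(\rho)^{k-1}\,[z^{n-k}]\tilde g(z)$, while the ratio property $[z^m]\tilde g(z)/[z^{m+d}]\tilde g(z) \to \rho^d$ converts $[z^{n-k}]\tilde g(z)$ into $[z^{n-a}]\tilde g(z) = [z^{n+1-a}]\cC(z)$ up to a power of $\rho$. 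Summing over $k \equiv a \bmod d$ and collecting the powers of $\rho$ then produces the stated equivalence $[z^n]\cA(z) \sim C_{a-1}\,[z^{n+1-a}]\cC(z)$; the interchange of $\lim_n$ with this infinite sum is justified by a uniform Kesten-type domination bound of the shape $[z^m]\tilde g(z)^k \le K(\varepsilon)(\tilde g(\rho) + \varepsilon)^k\,[z^m]\tilde g(z)$ from the theory of subexponential sequences \cite{MR3097424}. In particular the series obtained from $\cA(z)$ by keeping the lattice $a + d\ndZ$ and shifting by $-a$ is, coefficientwise on $d\ndZ$, asymptotic to a constant multiple of $\tilde g(z) = \cC(z)/z$, and therefore lies in $\mathscr{S}_d$ by the same robustness (Lemma~\ref{le:help}).

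The step I expect to be the genuine obstacle is precisely this last uniform control --- the exchange of the limit $n \to \infty$ with the summation over the component count $k$ in the display above. Everything else is either a direct appeal to Theorems~\ref{te:extension}, \ref{te:app1} and \ref{te:haupt}, or the bookkeeping of the two shifts involved: the lattice shift $z \mapsto z^{-a}$ and the derived-species index shift $a \mapsto a-1$. Since the required domination is exactly the kind of estimate that the subexponential framework of \cite{MR3097424} is built to provide, once it is quoted in the right form the argument closes.
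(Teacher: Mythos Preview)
Your approach is correct and, for the total-variation convergence, coincides with the paper's: both reduce $\textup{frag}(\mA_n)$ to the remainder $\mR_n$ of the Gibbs partition $\Set\circ\cC$ and invoke Theorem~\ref{te:extension} with $m=1$ (your verification of the hypotheses, the identification $\Set_a'\simeq\Set_{a-1}$, and the $d=1$ case via Lemma~\ref{le:help} are all sound).

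For the enumerative asymptotic you take a slightly different route from the paper. The paper does not expand $\exp(\cC(z))$ termwise; it simply observes that $[z^n]\cA(z)=[z^n](\Set_a\circ\cC)(z)$ and reads off the asymptotic from the randomly-stopped-sum estimate~\eqref{eq:refme}, which was already established in the proof of Theorem~\ref{te:extension} via Theorem~\ref{te:haupt} and Lemma~\ref{le:help}. In other words, the paper treats the asymptotic as a byproduct of the machinery built for Theorem~\ref{te:extension} and avoids any explicit interchange argument. Your direct expansion plus a Kesten-type domination bound is a legitimate alternative and is the standard way one proves results like Theorem~\ref{te:haupt} in the first place; the trade-off is that you have to manage the dependence of the exponent $n-k$ on $k$, which (as you correctly anticipate) is where all the work lies. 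Citing~\eqref{eq:refme}, as the paper does, sidesteps this bookkeeping entirely. The paper is also silent on the $\mathscr{S}_d$ membership of the restricted $\cA$-series; your deduction of it from the coefficient asymptotic via Lemma~\ref{le:help} is the natural way to fill that gap.
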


This allows us to precisely describe under which conditions the graph class $\cA$ is smooth.
\begin{theorem}
	The graph class $\cA$ is smooth, if and only if  $d=1$.
\end{theorem}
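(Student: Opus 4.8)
The plan is to derive this equivalence from Theorem~\ref{te:app2} together with the definitions. Recall that $\cA$ is \emph{smooth} iff $\cA(z)$ has finite positive radius of convergence and the ratio $[z^{n}]\cA(z)/[z^{n+1}]\cA(z)$ converges. The finiteness and positivity of $\rho$ follows from smallness of $\cA$ and the fact that $\cB$ is nonempty (so that $\cA(z) = \exp(\cC(z))$ is not a polynomial and has $\rho < \infty$; and $\rho > 0$ since $\cA$ is small). So the content is entirely in the ratio condition, and I would split into the two directions.

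First, the direction $d = 1 \Rightarrow$ smooth. When $d = 1$, Theorem~\ref{te:app2} (with $a = 0$, or rather noting that the single lattice $0 + 1\cdot\ndZ$ is all of $\ndN$) gives $[z^n]\cA(z) \sim C_{-1}\, [z^{n+1}]\cC(z)$ as $n \to \infty$, where $C_{-1} = \sum_{k \ge 0, \, k \equiv -1 \bmod 1} \cC(\rho)^k/k! = \sum_{k\ge 0}\cC(\rho)^k/k! = \exp(\cC(\rho))$ is a finite positive constant (finite because $\cC(\rho) < \infty$ by Theorem~\ref{te:app1}). Hence
\[
\frac{[z^n]\cA(z)}{[z^{n+1}]\cA(z)} \sim \frac{[z^{n+1}]\cC(z)}{[z^{n+2}]\cC(z)} \to \rho,
\]
using the ratio asymptotics $c_n/c_{n+1} \sim \rho$ from Theorem~\ref{te:app1} (the case $d=1$ of $c_n/c_{n+d}\sim\rho^d$). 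Thus the ratio test holds and $\cA$ is smooth.

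For the converse, I would argue by contraposition: assume $d \ge 2$ and show the ratio $[z^n]\cA(z)/[z^{n+1}]\cA(z)$ does not converge. The key point is that $\cC'(z) \in \mathscr{S}_d$ with $d \ge 2$, so by definition of $\mathscr{S}_d$ the coefficients $c_n = [z^n]\cC(z)$ vanish unless $n \equiv 1 \bmod d$; consequently $[z^n]\cA(z)$ is supported, for large $n$, on a union of at most $d$ shifted sublattices, and I must check it is \emph{not} supported on a single one. Indeed $\cA(z) = \exp(\cC(z))$, and since $\cC(z)$ has terms in degrees $\equiv 1 \bmod d$ (in particular degree $1$, a single vertex, since $\cA$ is block-stable hence contains the one-vertex graph), $\exp(\cC(z))$ has nonzero coefficients in every degree of the form $k \cdot 1 = k$ for... no — more carefully, the support of $\cA$'s coefficients is the numerical semigroup generated by the support of $\cC$'s coefficients, which (containing $1$) is eventually all of $\ndN$. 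So for large $n$, consecutive coefficients $[z^n]\cA(z)$ and $[z^{n+1}]\cA(z)$ are both positive. Now Theorem~\ref{te:app2} gives, for each residue $a$ with $0 \le a < d$, that $[z^n]\cA(z) \sim C_{a-1}[z^{n+1-a}]\cC(z)$ along $n \equiv a \bmod d$. Comparing the ratio along $n \equiv a$ versus $n \equiv a+1$ (reading both $[z^n]$ and $[z^{n+1}]$ via the appropriate residue classes and the asymptotics $c_m \sim c_{m+d}\rho^{-d}$), the limiting ratio picks up a factor $C_{a-1}/C_a \ne 1$ in general; one must verify the $C_{a-1}$ are not all equal. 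Since $\sum_{a} C_{a-1} x^{a} $ is, up to reindexing, the partial sums of $\exp(\cC(\rho))$ split by residue mod $d$, and $\cC(\rho) > 0$, these $d$ partial sums are pairwise distinct (e.g. $C_{-1} = \sum_{k \equiv -1} \cC(\rho)^k/k!$ differs from $C_0 = \sum_{k\equiv 0}\cC(\rho)^k/k!$ because the power series $\exp$ has strictly positive coefficients and the residue classes are genuinely different for $d\ge 2$). Hence the ratio $[z^n]\cA(z)/[z^{n+1}]\cA(z)$ takes at least two distinct limit values along different residue classes, so it does not converge, and $\cA$ is not smooth.

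The main obstacle I expect is the bookkeeping in the converse direction: one has to carefully track which residue class mod $d$ the indices $n$, $n+1$, $n+1-a$ fall into, handle the fact that $[z^n]\cC(z)$ is only nonzero on $1 + d\ndZ$ (so that $[z^{n+1-a}]\cC(z)$ is the "right" nonzero coefficient precisely when $n \equiv a \bmod d$), and rigorously prove the $C_{a-1}$ are pairwise distinct — the latter is a clean but genuine computation about the residue-class decomposition of $\exp(\cC(\rho))$ with $\cC(\rho) \in (0,\infty)$. A cleaner packaging might avoid comparing across all residues and instead just exhibit two residue classes $a \ne a'$ with $C_{a-1} \ne C_{a'-1}$ and note that the subsequential limits of the ratio along $n \equiv a$ and along $n \equiv a'$ differ, which already defeats the ratio test.
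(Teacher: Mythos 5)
Your overall structure mirrors the paper's: for $d=1$ use Theorem~\ref{te:app2} to see $[z^n]\cA(z) \sim \exp(\cC(\rho))\,[z^{n+1}]\cC(z)$ and conclude from the ratio asymptotics of $\cC$; for $d\geq 2$ reduce to showing the constants $C_0,\ldots,C_{d-1}$ are not all equal, which would make the ratio $[z^n]\cA(z)/[z^{n+1}]\cA(z)$ jump between distinct subsequential limits across residue classes. That part of your argument is sound, and your observation that $\cA(z)$ has all positive coefficients for large $n$ (needed so the ratio is defined) is also correct.

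However, there is a genuine gap in the $d\ge 2$ direction: you assert that the $C_{a-1}$ are pairwise distinct (or at least not all equal) ``because the power series $\exp$ has strictly positive coefficients and the residue classes are genuinely different,'' and then concede this requires ``a clean but genuine computation'' that you do not supply. Positivity of the coefficients of $\exp$ does not by itself rule out that two (or all) of the residue-class partial sums $C_j = \sum_{k\equiv j \bmod d}\cC(\rho)^k/k!$ happen to coincide for the particular value $\cC(\rho)$ at hand; this is a real analytic fact that needs an argument. The paper closes this gap with a short and decisive roots-of-unity identity: for any $d$-th root of unity $\zeta$,
\[
\exp\bigl(\zeta\,\cC(\rho)\bigr) \;=\; C_0 + C_1\zeta + \cdots + C_{d-1}\zeta^{d-1}.
\]
If all $C_j$ were equal to $C_0$, choosing $\zeta\neq 1$ would force the right-hand side to be $C_0(1+\zeta+\cdots+\zeta^{d-1}) = 0$, contradicting the fact that the exponential function never vanishes. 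This single line is exactly the missing ingredient; without it (or an equivalent argument) your proof of the ``only if'' direction does not go through.
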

Indeed, for $d=1$ the coefficients of $\cA(z)$ behave asymptotically up to a constant factor like those of $\cC(z)$, and hence grow smoothly. For $d \ge 2$, the only way for $\cA$ to be smooth is when $C_0 = \ldots = C_{d-1}$. There is a beautiful reason, why this may never happen. If $C_i = C_{i+1}$ would hold for all $i$, then we could select a $d$-th root of unity $\zeta \ne 1$ and deduce the contradiction
\begin{align*}
	\exp(\zeta\cC(\rho)) = C_0 + C_1 \zeta + \ldots + C_{d-1}  \zeta^{d-1} = C_0(1 + \zeta + \ldots + \zeta^{d-1}) 
	= 0.
\end{align*}
Hence $\cA$ cannot be smooth for $d \ge 2$.

\section{Extensions}
\label{sec:ext}

Many classes of weighted combinatorial composite structures may be expressed by a subcritical substitution scheme
$
		\cF^\omega \circ \cG^\nu,
$
such that Lemma~\ref{le:simply} may be applied either directly, or similarly as in the proof of Theorem~\ref{te:app1}, to show that $\cG^\nu$ belongs up to a constant shift to the class $\mathscr{S}_d$ for some $d \ge 1$. 
We illustrate this with some examples. There are of course many more, but we do not aim to provide an exhaustive list.

\paragraph{Random graphs with block-weights.}
	Random graphs from block-stable classes have a natural generalization to the weighted setting.
	Suppose that we are given a weighting $\gamma$ on the species $\cB$  of all graphs that are two-connected or consist of two vertices joined by a single edge. This yields a weighting on the species of connected graphs $\cC$, given by
	\[
		\nu(C) = \prod_B \gamma(B),
	\]
	with the index $B$ ranging over all blocks of the connected graph $C$. Here we set $\nu( \bullet)=1$ for the graph~"$\bullet$" consisting of a single vertex. Likewise we may define a weighting $\mu$ on the species $\cG$ of all graphs in the same way, such that
	\[
		\cG^\mu \simeq \Set \circ \cC^\nu.
	\]
	
	We may consider a random $n$-vertex graph $\mG_n^\mu$ drawn from $\cG^\mu[n]$ with probability proportional to its weight. This encompasses uniform random graphs from block-stable classes, which correspond precisely to the case where $\gamma(B) \in \{0,1\}$ for all blocks $B$. 
	
	The block-decomposition of connected graphs into $2$-connected components described for example in  Labelle \cite[2.10]{MR699986} can easily be seen to preserve the weights, yielding a weighted version of Equation~\eqref{eq:simple}:
	\[
		z (\cC')^\nu(z) = z \exp( (\cB')^\gamma(z (\cC')^\nu(z)).
	\]
	Hence a straight-forward analogon of Theorem~\ref{te:app2} also holds for the random graph $\mG_n^\mu$. The corresponding proof requires no modification at all.

\paragraph{Forests of Galton--Watson trees with a random number of trees.}
Let $(\cT_i)_{i \ge 1}$ be a family of independent copies of a subcritical or critical  Galton--Watson tree $\cT$ with offspring distribution $\xi$. Let $\mK$ denote an independent random non-negative integer having finite exponential moments. We may consider a  Galton--Watson forest $\mF$ with a random number of trees
\[
	\mF = (\cT_{1}, \ldots, \cT_K),
\]
and let
\[
	|\mF| = \sum_{i=1}^K |\cT|_i
\]
denote its size. The probability generating functions
\[
f(z) = \Ex{z^{|\mF|}}, \qquad \psi(z) = \Ex{z^K}, \qquad \phi(z) = \Ex{z^\xi}, \qquad \text{and} \qquad \cZ(z) = \Ex{z^{|\cT|}}
\]
are related by
\[
	f(z) = \psi(\cZ(z)) \qquad \text{and} \qquad \cZ(z) = z \phi(\cZ(z)).
\]
Obviously Lemma~\ref{le:simply} applies to $\cZ(z)$, so we obtain an analogon of Theorem~\ref{te:app2} that describes the asymptotic behaviour if we condition the forest $\mF$  to be large and cut down the largest tree.

\section{Proofs}
\label{sec:proofs}

We list the proofs of our results in order of their appearance.

\subsection{Proofs from Section~\ref{sec:conv}}

\begin{proof}[Proof of Theorem \ref{te:main}]
	In the following, we assume tacitly that  $n$ is divisible by $d$, and large enough such that $[z^n](\cF^\omega \circ \cG^\nu)(z) > 0$. 	The distributions of $\mR_n$ and $\mR$ are both invariant under relabelling uniformly at random. This implies that conditioned on having a fixed isomorphism type $\mathscr{T}$ of an $\cF' \circ \cG$-object with positive $\mu$-weight, each possible labelling of $\mathscr{T}$ is equally likely. In particular, it follows that
	\[ (\mR_n \mid \tilde{\mR} = \mathscr{T}) = (\mR \mid \tilde{\mR} = \mathscr{T}).\]
	Hence it suffices to establish total variational convergence of the isomorphism type of $\mR_n$ to the isomorphism type of $\mR$, that is
	\begin{align}
	\label{eq:showmee}
	\lim_{n \to \infty} d_{\textsc{TV}}(\tilde{\mR}_n, \tilde{\mR}) = 0.
	\end{align}
	
	Let $\mF$ denote a random $\cF$-object following a $\mathbb{P}_{\cF^\omega, \cG^\nu(\rho)}$-distribution, and for each $1 \le i \le |\mF|$ let $\mG_i$ be an independent $\mathbb{P}_{\cG^\nu, \rho}$ distributed $\cG$-object. Lemma~\ref{le:composition} states that, up to relabelling uniformly at random, the tupel \[\mS := (\mF, \mG_1, \ldots, \mG_{|\mF|})\] follows a $\mathbb{P}_{\cF^\omega \circ \cG^\nu, \rho}$ distribution. Consequently, if we condition $\mS$ on having size $n$, then it corresponds to an element from $(\cF \circ \cG)[n]$ that is sampled with probability proportional to its weight. That is, setting $f = |\mF|$ and $g_i = |\mG_i|$ for all $i$, it follows that as unlabelled objects %$\mS_n$ is distributed like
	\[
	\mS_n \eqdist ( \mS \mid g_1 + \ldots + g_f =n).
	\]
	
	%	 the probability for this event is given by
	%	\[
	%		\Pr{|\mS| = n} = \Pr{g_1 + \ldots + g_f = n} = \frac{\rho^n}{n!} [z^n](\cF^\omega \circ \cG^\nu)(z) / (\cF^\omega \circ \cG^\nu)(\rho).
	%	\]
	
	The random $\cF'\circ \cG$-object $\mR$ follows a $\mathbb{P}_{(\cF)'^\omega \circ \cG^\nu, \rho}$ distribution. Hence we may apply Lemma~\ref{le:composition} again to identify it up to relabelling  with a tuple
	\[
	\mR = (\mF', \mG^1, \ldots, \mG^{|\mF'|}),
	\]
	where $\mF'$ follows a $\mathbb{P}_{(\cF')^\omega, \cG^\nu(\rho)}$ distribution, and the $\mG^i$ are independent and $\mathbb{P}_{\cG^\nu, \rho}$ distributed. To simplify notation, we set $f' := |\mF'|$ and $g^i = |\mG^i|$ for all $i$. 
	
	Let $\hat{\mS}_n$ denote the composite $\cF \circ \cG$-structure obtained by sampling a random $\cG$-structure $\mG^*$ from $\cG[n - |\mR|]$ with probability proportional to its $\nu$-weight, and assigning it to the $*$-vertex of $\mF'$. This is only well-defined if 
	\begin{align}
	\label{eq:mycond}
	n - |\mR|>0 \qquad \text{and} \qquad  [z^{n- |\mR|}] \cG^\nu(z) > 0,
	\end{align} otherwise we set $\hat{\mS}_n$ to some place-holder value. The probability for the event \eqref{eq:mycond} tends to $1$ as $n$ becomes large, since $|\mR|$ is almost surely finite and a multiple of $d$, and $[z^{kd}]\cG^\nu(z) >0$ for all sufficiently large $k$ by assumption.
	
	We are going to show that as unlabelled $\cF \circ \cG$-objects
	\begin{align}
	\label{eq:convtype}
	\lim_{n \to \infty} d_{\textsc{TV}}( \mS_n, \hat{\mS}_n) = 0.
	\end{align}
	This implies Equation~\eqref{eq:showmee}, since the probability, that $\mR$ is the largest component of $\hat{\mS}_n$, tends to $1$ as $n$ becomes large.% $\mR$ is with probability tending to $1$ the largest component in $\hat{\mS}_n$.

	%\begin{align}
	%\label{eq:convtype}
	%\lim_{n \to \infty} d_{\textsc{TV}}(\mS_n, \mR_n) = 0.
	%\end{align}
	
	Let $g$ denote a random variable that is distributed like the size of a random $\cG$-object with a $\mathbb{P}_{\cG^\nu, \rho}$ distribution. Since $\cG^\nu(z)$ belongs to $\mathscr{S}_d$, it holds that
	\[
	\Pr{g = n+d} \sim \Pr{g = n}, \qquad n \to \infty.
	\]
	Consequently, there is a sequence $t_n$ of non-negative integers such that $t_n \to \infty$ and
	\begin{align}
	\label{eq:tohold}
	\lim_{n \to \infty} \sup_{\substack{0 \le y \le t_n \\ y  \equiv 0 \mod d}}  | \Pr{g = n + y}/\Pr{g = n} -1| = 0.
	\end{align}
	Indeed, for each $\epsilon>0$ and $t>0$ there is a constant $N_{\epsilon, t} \ge 1$ such that for all $n \ge N$
	\[
	\sup_{\substack{0 \le y \le t \\ y  \equiv 0 \mod d}}  | \Pr{g = n + y}/\Pr{g = n} -1| \le \epsilon.
	\]
	Setting $t_n = 1$ for $n < N_{2,1/2}$, $t_n = 2$ for $ N_{2,1/2} \le x < N_{2,1/2} + N_{3,1/3}$, and in general $t_n = k$ for
	\[
	N_{2,1/2} + \ldots + N_{k,1/k} \le n \le  N_{2,1/2} + \ldots + N_{k+1,1/(k+1)},
	\]
	yields a sequence with the desired properties.

	Let $k, x_1, \ldots, x_k \ge 1$ be integers with $x_1 + \ldots + x_k = n$. If we condition on $f=k$ and $g_i=x_i$ for all $1 \le i \le k$, then $\mF$ gets drawn from $\cF[k]$ with probability proportional to its $\omega$-weight, and likewise $\mG_i$ gets drawn from $\cG[x_i]$ with probability proportional to its $\nu$-weight for all $i$. Conditioned on having size $k-1$, $\mF'$ gets drawn from $\cF[ [k-1] \cup \{*\}]$ with probability proportional to its $\omega$-weight. Thus, up to relabeling uniformly at random, \[
	(\mF' \mid f'=k-1) \eqdist (\mF \mid f=k).
	\]
	Since $x_1 + \ldots + x_k = n$, it follows that as unlabelled $\cF \circ \cG$-objects
	\begin{align}
	\label{eq:gibtmp}
	(\mS \mid f=k, g_i=x_i, 1 \le i \le k) \eqdist (\hat{\mS}_n \mid f'= k-1, g^i=x_i, 1 \le i \le k-1).
	\end{align}	
	For any sequence $\mathbf{y} = (y_1, \ldots, y_{k-1})$ of positive integers with $D(\mathbf{y}) := y_1 + \ldots, y_{k-1} < n$ set
	\[
	\sigma_n(\mathbf{y}) = \{ (y_1, \ldots, y_{j-1}, n - D(\mathbf{y}), y_j, \ldots, y_k) \mid 1 \le j \le k\}.
	\]
	In order for Equation~\eqref{eq:tohold} to hold, we may replace the sequence $t_n$ by any other sequence that tends to infinity more slowly. So without loss of generality, we may assume that $t_n < n/2$ for all  $n$, and set
	\[
	M_n := \{ (k, \mathbf{y}) \mid k \ge 1, \mathbf{y} \in \ndN^{k-1}, D(\mathbf{y}) \le t_n \}.
	\]
	We are going to verify that
	\begin{align}
	\label{eq:gibtoshow}
	\Pr{ f=k, (g_1, \ldots, g_k) \in \sigma_n(\mathbf{y}) \mid g_1 + \ldots + g_f = n} \sim \Pr{ f'=k-1, (g^1, \ldots, g^{k-1}) = \mathbf{y} }
	\end{align}
	uniformly for all $(k, \mathbf{y}) \in M_n$. Since $t_n < n/2$, it holds that given $g_1 + \ldots g_f = n$ and $f=k$,  the event $(g_1, \ldots, g_k) \in \sigma_n(\mathbf{y})$ corresponds to $k$ distinct outcomes, depending on the unique location for the maximum of the $g_i$. Each outcome  is equally likely, so the  left-hand side in \eqref{eq:gibtoshow} divided by the right-hand side  equals
	\[
	\frac{k \Pr{f=k} \Pr{g=n - D(\mathbf{y}) } }{\Pr{f'=k-1} \Pr{g_1 + \ldots + g_f = n}},
	\]
	with
	\[
	\frac{k \Pr{f=k}}{\Pr{f'=k-1}} = \frac{ (\cF')^\omega(\cG^\nu(\rho)) \cG^\nu(\rho)}{\cF^\omega(\cG^\nu(\rho))} = \Ex{f}.
	\]
	As $D(\mathbf{y}) \le t_n$, it follows by Equation~\eqref{eq:tohold} that uniformly for $(k, \mathbf{y}) \in M_n$
	\[
	\Pr{g = n - D(\mathbf{y})} \sim \Pr{g = n}.
	\] 
	By Theorem~\ref{te:haupt} it holds that
	\[
	\Pr{g_1 + \ldots + g_f = n} \sim \Ex{f} \Pr{g=n},
	\]
	and \eqref{eq:gibtoshow} follows.
	
	To complete the proof, we first note that \begin{align}
	\label{eq:toimb}
	\lim_{n \to \infty} \Pr{ (f'+1, (g^1, \ldots, g^{f'})) \in M_n} = 1.
	\end{align} Hence \eqref{eq:gibtoshow} yields that with probability tending to $1$ as $n$ becomes large
	\[
	((f, (g_1, \ldots, g_f)) \mid g_1 + \ldots g_f = n)  \in \{k\} \times \sigma_n(\mathbf{y}) \quad \text{for some} \quad (k, \mathbf{y}) \in M_n.
	\]
	Using Equation \eqref{eq:gibtmp}, it follows that uniformly for all sets $\cE$ of $n$-sized unlabelled $\cF \circ \cG$-objects 
	\begin{align*}
	\Pr{\mS_n \in \cE} &= \Pr{ \mS \in \cE \mid g_1 + \ldots g_f = n} \\ 
	&= o(1) + \sum_{ (k,\mathbf{y}) \in M_n} \Pr{ \mS \in \cE, f=k, (g_1, \ldots, g_f) \in \sigma_n(\mathbf{y}) \mid g_1 + \ldots g_f = n}.
	\end{align*}
	For each $(k, \mathbf{y}) \in  M_n$, the corresponding summand may be simplified to
	\begin{align*}
	\Pr{ \mS \in \cE, f=k, (g_1, \ldots, g_f) \in \sigma_n(\mathbf{y})} / \Pr{g_1 + \ldots g_f = n},
	\end{align*}
	and then expressed as the product
	\begin{align*}
	\Pr{ \mS \in \cE \mid f=k, (g_1, \ldots, g_k) \in \sigma_n(\mathbf{y})} \Pr{f=k, (g_1, \ldots, g_k)\in \sigma_n(\mathbf{y}) \mid g_1 + \ldots g_f = n}.
	\end{align*}	
	%	\begin{align*}
	%	 \sum_{ (k,\mathbf{y}) \in M_n}  \Pr{ \mS \in \cE \mid f=k, (g_1, \ldots, g_k) \in \sigma_n(\mathbf{y})} \Pr{f=k, (g_1, \ldots, g_k)\in \sigma_n(\mathbf{y}) \mid g_1 + \ldots g_f = n}.
	%	\end{align*}
	We treat the two factors separately. For the first, Equation~\eqref{eq:gibtmp} yields 
	\[
	\Pr{ \mS \in \cE \mid f=k, (g_1, \ldots, g_k) \in \sigma_n(\mathbf{y})} = \Pr{ \hat{\mS}_n \in \cE \mid f'= k-1, g^i=y_i, 1 \le i \le k-1}.
	\]
	By \eqref{eq:gibtoshow} it holds that
	\[
	\Pr{f=k, (g_1, \ldots, g_k)\in \sigma_n(\mathbf{y}) \mid g_1 + \ldots g_f = n} \sim \Pr{ f'=k-1, (g^1, \ldots, g^{k-1}) = \mathbf{y} }
	\]
	uniformly for all $(k, \mathbf{y}) \in M_n$. Using Equation \eqref{eq:toimb} it follows that
	\begin{align*}
	\Pr{\mS_n \in \cE} &= o(1) + \sum_{(k, \mathbf{y}) \in M_n}(1+ o(1)) \Pr{\hat{\mS}_n \in \cE , f' = k-1, (g^1, \ldots, g^{k-1}) = \mathbf{y}}\\
	&= o(1) + \Pr{\hat{\mS}_n \in \cE}.
	\end{align*}
	This completes the proof.	
\end{proof}

\begin{proof}[Proof of Proposition~\ref{pro:fftt}]
	By Theorem~\ref{te:main} we need only check the convergence of the moments. With $\cF^\omega(z) = \sum_{i=0}^\infty f_i z^i$, set $f(z) = \sum_{i=1}^\infty i^k f_i z^i$. Theorem~\ref{te:haupt} implies that
	\[
	\Ex{c(\mS_n)^k} = \frac{ [z^n] f(\cG^\nu(z))}{[z^n] \cF^\omega(\cG^\nu(z))} \sim \frac{f'(\cG^\nu(\rho))}{ (\cF')^\omega(\cG^\nu(\rho))} = \Ex{(c(\mR) +1)^k}.
	\]
\end{proof}

\begin{proof}[Proof of Lemma~\ref{le:simply}]
	We will tacitly assume that $n \equiv 1 \mod d$. The number $Z_n$ is known as the partition function of simply generated trees. That is, the random plane tree $\cT_n$ with distribution given by
	\[
	\Pr{\cT_n = T} = Z_n^{-1} \prod_{v \in T} \omega_{d^+(v)}
	\]
	for any plane tree $T$, with $d^+(v)$ denoting outdegree of a vertex $v$, that is, its number of sons.
	There is a well-known connection between simply generated trees and branching processes \cite{MR2908619,MR2484382}: Precisely when $\rho_\cZ>0$, there is a critical or subcritical Galton--Watson tree $\cT$, with
	\[
	\Pr{|\cT|=n} = Z_n \rho_\cZ^n / \cZ(\rho_\cZ),
	\]
	such the simply generated tree $\cT_n$ is distributed like the $\cT$ conditioned on having $|\cT|=n$ vertices:
	\[
	\cT_n \eqdist (\cT \mid |\cT|=n).
	\]
	Hence in order to verify
	\[
	Z_{n}^{-1} \sum_{i+j=n+1} Z_i Z_j \sim 2\cZ(\rho_\cZ)/\rho_\cZ,
	\] 
	we need to show that 
	\begin{align}
	\label{eq:toshow}
	\Pr{ |\cT| + |\cT'| = n+1} \sim 2 \Pr{|\cT|=n},
	\end{align}
	with $\cT'$ denoting an independent copy of $\cT$. Let $\xi$ with $\Ex{\xi} \le 1$ denote the offspring distribution of the Galton--Watson tree $\cT$. Let  $(\xi_i)_{i \ge 1}$  a family of independent copies of $\xi$, and 
	\[
	S_n = \xi_1 + \ldots + \xi_n
	\]
	the associated random-walk. Any list $d_1, \ldots, d_n$ in $\ndN_0$ corresponds to the out-degrees of a depth-first-search ordered list of vertices of a plane tree with size $n$, if and only if
	\[
	\sum_{i=1}^n d_i = n-1 \qquad \text{and} \qquad \sum_{i=1}^k d_i \ge k \quad \text{for all $k<n$}.
	\]
	A classical combinatorial observation, also called the cycle lemma, states that for any sequence $x_1, \ldots, x_{s} \ge -1$  of integers satisfying
	\[
	\sum_{i=1}^s x_i = -r
	\]
	for some $r \ge 1$, there are precisely $r$ integers $1 \le u \le s$ such that the cyclically shifted sequence \[x_i^{(u)} = x_{1 + (i+u)\mod s}\] satisfies
	\[
	\sum_{i=1}^\ell x_i^{(u)} > r
	\]
	for all $1 \le \ell \le s-1$;
	see for example \cite[Lem. 15.3]{MR2908619}.
	Hence
	\begin{align*}
	\Pr{|\cT|=n} &= \Pr{\xi_1 + \ldots + \xi_n = n-1, \xi_1 + \ldots + \xi_k \ge k \text{ for $k<n$}} \\
	&= \frac{1}{n} \Pr{S_n = n-1}.
	\end{align*}
	Likewise, any list $d_1, \ldots, d_{n+1}$ in $\ndN_0$ corresponds to the concatenation of the depth-first-search ordered lists of outdegrees of two plane trees with total size $n+1$, if and only if
	\[
	\sum_{i=1}^{n+1} d_i = n-1 \qquad \text{and} \qquad \sum_{i=1}^k d_i \ge k-1 \quad \text{for all $k\le n$}.
	\]
	This yields
	\begin{align*}
	\Pr{|\cT| + |\cT'| =n+1} &= \Pr{\xi_1 + \ldots + \xi_{n+1} = n-1, \xi_1 + \ldots + \xi_k \ge k -1 \text{ for $k \le n$}} \\&= \frac{2}{n+1} \Pr{S_{n+1} = n-1}.
	\end{align*}
	Hence 
	\[
	\frac{\Pr{|\cT|+ |\cT'|=n+1}}{\Pr{|\cT|=n}} = 2 \frac{n+1}{n} \frac{\Pr{S_{n+1} = n-1}}{\Pr{S_n = n-1}}.
	\]
	By the strong ratio property~\cite{MR0107292}, it holds that
	\[
	\Pr{S_{n+1} = n-1} \sim \Pr{S_n = n-1}.
	\]
	This verifies \eqref{eq:toshow} and completes the proof.
\end{proof}

\begin{proof}[Proof of Theorem~\ref{te:extension}]
	Let $\mF$ denote a random $\cF$-object following a $\mathbb{P}_{\cF^\omega, \cG^\nu(\rho)}$-distribution, and for each $1 \le i \le |\mF|$ let $\mG_i$ be an independent $\mathbb{P}_{\cG^\nu, \rho}$ distributed $\cG$-object. Then Lemma~\ref{le:composition} yields that the tupel \[\mS := (\mF, \mG_1, \ldots, \mG_{|\mF|})\] follows up to relabelling a $\mathbb{P}_{\cF^\omega \circ \cG^\nu, \rho}$ distribution. We set $f =|\mF|$ and $g_i = |\mG_i|$ for all $i$. The Gibbs partition $\mS_n$ is a random structure sampled from  $(\cF \circ \cG)[n]$ with probability proportional to its weight. Hence it is distributed like the Boltzmann structure $\mS$ conditioned on having size $n$:
	\begin{align}
	\label{eq:t1}
	\mS_n \eqdist ( \mS \mid g_1 + \ldots + g_f =n).
	\end{align}
	By assumption, there is an integer $0 \le m < d$ such that  $\cG^\nu(z) / z^m$ lies in the class $\mathscr{S}_d$. In particular, we have 
	\[
		g_i \equiv m \mod d.
	\] for all $1 \le i \le f$. Recall that $D = d/ \gcd(m,d).$
	If $g_1 + \ldots + g_f = n$, then for all $0 \le a <D$ it holds that 
	\begin{align}
	\label{eq:t2}
	f \equiv a \mod D \qquad \text{if and only if} \qquad n \equiv am \mod d.
	\end{align}
	For $n \ge 1$, the event $f \equiv a \mod D$ has positive probability if and only if the restriction $\cF_a^\omega$ to objects with size in $a + D\ndZ$ has a non-constant generating function $\cF_a^\omega(z)$. Let us fix an integer $0 \le a <D$ with this property, and suppose that $n \equiv am \mod d$. Then Equations~\eqref{eq:t1} and \eqref{eq:t2} imply that
	\[
		\mS_n \eqdist (\mS \mid  g_1 + \ldots g_{f} = n, f \equiv a \mod D).
	\]
	Conditioned on having size in $a + D\ndZ$, the random $\cF$-object $\mF$ follows a $\mathbb{P}_{\cF_a^\omega \circ \cG^\nu,\rho}$ distribution. Let $\mF_a$ denote a $\mathbb{P}_{\cF_a^\omega, \cG^\nu(\rho)}$ distributed $\cF_a$-object that is independent from all previously considered random variables. It follows that
	\[
		\mS_n \eqdist ( (\mF_a, \mG_1, \ldots, \mG_{|\mF_a|}) \mid g_1 + \ldots + g_{|\mF_a|} = n).
	\]
	It follows by Lemma~\ref{le:composition}, that the vector
	\[
		(\mF_a, \mG_1, \ldots, \mG_{|\mF_a|})
	\]
	has a $\mathbb{P}_{\cF_a^\omega \circ \cG^\nu,\rho}$ Boltzmann distribution, and consequently $\mS_n$ is distributed like the $n$-sized Gibbs partition for $\cF_a^\omega \circ \cG^\nu$. If we can verify that %Since $\cG^\nu(z) / z^m$ lies in the class $\mathscr{S}_d$, it follows that
	\begin{align}
		\label{eq:refme}
		\Pr{ g_1 + \ldots + g_{f_a} = n} \sim \Ex{f_a} \Pr{g = n - (a-1)m}, \qquad n \to \infty, \qquad n \equiv am \mod d,
	\end{align}
	then the convergence in total variation of $\mR_n$ toward $\mR(a)$ follows in an entirely analogous manner as in the proof of Theorem~\ref{te:main}. 

	Thus it remains to check \eqref{eq:refme}. Let $g$ be distributed according to the size of a random $\cG$-object following a $\mathbb{P}_{\cG^\nu, \rho}$ Boltzmann distribution. We may write
	\[
		g = m + d \bar{g}, \qquad f_a = a + \bar{f}_aD, \qquad n = am + \bar{n}d,
	\]
	with $\bar{n} \in \ndN_0$, and $\bar{g}, \bar{f}_a$ random non-negative integers. We let $(\bar{g}_i^{(j)})_{i,j \ge 0}$, denote independent copies of $\bar{g}$, and set \[
		S_i^{(j)} = g_1^{(j)} + \ldots + g_i^{(j)}.
	\]	
	Thus
	\[
		\Pr{g_1 + \ldots + g_{f_a} = n} = \Pr{ S_a^{(0)} + (S_D^{(1)} + Dm/d) + \ldots + (S_D^{(\bar{f}_a)} + Dm/d) = \bar{n}}.
	\]
	Since $\cG^\nu(z) / z^m$ lies in the class $\mathscr{S}_d$ by assumption, it follows that the probability weight sequence of $\bar{g}$ lies in $\mathscr{S}_1$. By Theorem~\ref{te:haupt} it follows that the densities of $S_a^{(0)}$ and $S_D^{(j)} + Dm/d$ belong to $\mathscr{S}_1$. Applying Theorem~\ref{te:haupt} again yields that the same holds for the randomly stopped sum $\sum_{j=1}^{\bar{f}_a}(S_D^{(j)} + Dm/d)$, with
	\[
		\Pr{ S_a^{(0)} = \bar{n}} \sim a \Pr{\bar{g}=\bar{n}} \qquad \text{and} \qquad \Pr{\sum_{j=1}^{\bar{f}_a}(S_D^{(j)} + Dm/d) =x} \sim D \Ex{\bar{f}_a}  \Pr{\bar{g}=\bar{n}}
	\]
	as $\bar{n} \to \infty$. Hence Lemma~\ref{le:help} yields
	\[
		\Pr{ S_a^{(0)} + (S_D^{(1)} + Dm/d) + \ldots + (S_D^{(\bar{f}_a)} + Dm/d) = \bar{n}} \sim (a + D\Ex{\bar{f}_a}) \Pr{\bar{g}=\bar{n}}.
	\]
	This verifies \eqref{eq:refme} and thus completes the proof.	
\end{proof}

\subsection{Proofs from Section~\ref{sec:graphs}}

\begin{proof}[Proof of Theorem~\ref{te:app1}]
	Equation~\eqref{eq:simple} yields that
	\[
	z\cC'(z) = z \phi(z\cC'(z))
	\]
	for the power series $\phi(z) = \exp(\cB'(z))$. Hence we may apply Lemma~\ref{le:simply} and obtain that  the series $\cC'(z)$ belongs to the class $\mathscr{S}_d$. That is, the coefficients $x_n = [z^n] \cC'(z) = (n+1) c_{n+1}$ satisfy% of subexponential series with span $d$. 
	\begin{align*}
	\cC'(\rho) < \infty, \qquad \frac{x_n}{x_{n+d}} \sim \rho^d, \qquad \frac{1}{x_n}\sum_{i+j=n}x_i x_j \sim 2 \cC'(\rho) < \infty,
	\end{align*}
	as $n \equiv 0 \mod d$ becomes large.
	It is clear, that this also implies
	\[
	\cC(\rho) < \infty \qquad \text{and} \qquad \frac{c_n}{c_{n+d}} \sim \rho^d, \qquad n \to \infty, \qquad n \equiv 1 \mod d.
	\]
	A characterization of subexponential series given for example in Foss, Korshunov, Zachary \cite[Thm. 4.21]{MR3097424} states that for any sequence $k_n \to \infty$ with $k_n < n/2$ it holds that
	\[
	\frac{1}{x_n} \sum_{\substack{i+j=n\\i,j\ge k_n}} x_i x_j \to 0, \qquad n \to \infty, \qquad n \equiv 0 \mod d.
	\]
	As $x_n / x_{n+d} \sim \rho^d$, we may choose a sequence $k_n$ that tends to infinity slowly enough such that
	\[
	\lim_{n \to \infty} \sup_{\substack{0 \le y \le k_n \\ y \equiv 0 \mod d}} \left | \frac{x_n}{x_{n+y}} - \rho^{y}\right | = 0, \qquad n \equiv 0 \mod d.
	\]
	Without loss of generality we may additionally assume that $k_n = o(n)$. Hence 
	\begin{align*}
	\frac{1}{c_n} \sum_{i+j=n+1} c_i c_j &= \frac{1}{x_{n-1}} \sum_{i+j=n+1} \frac{n}{ij} x_{i-1}x_{j-1}  \\
	&= o(1) + 2 \sum_{1 \le i < k_n} c_i  \frac{n}{n-i}  \frac{x_{n-i}}{x_{n-1}} ,  \\
	&\to 2 \cC(\rho) / \rho.
	%o(1) +  2 \cC(\rho) / \rho.-
	\end{align*}
	as $n \equiv 1 \mod d$ becomes large. Thus, the shifted series $\cC(z)/z$ belongs to the class $\mathscr{S}_d$.
\end{proof}

\begin{proof}[Proof of Theorem~\ref{te:app2}]
	The convergence of the small fragments follows directly by Theorem~\ref{te:extension}. The asymptotic expression of $[z^n] \cA(z)$ follows from the observation that $n \equiv a \mod d$ implies
	\[
	[z^n] \cA(z) =  [z^n] (\Set_a \circ \cC)(z) \sim C_{a-1} [z^{n - (a-1)}] \cC(z),
	\]
	similar as in Equation~\eqref{eq:refme}. 
\end{proof}

\bibliographystyle{siam}
\bibliography{gibbs}

\end{document}